\makeindex \setcounter{tocdepth}{1}
\def\Z{{\rm \mathbb{Z}}}
\def\Q{{\rm \mathbb{Q}}}
\def\C{{\rm \mathbb{C}}}
\def\P{{\rm \mathbb{P}}}
\def\H{{\rm \mathbb{H}}}
\def\p{{\rm \mathfrak{p}}}
\def\a{{\rm \mathfrak{a}}}
\def\b{{\rm \mathfrak{b}}}
\def\O{{\rm \mathcal{O}}}
\def\lcm{{\rm lcm}}
\def\cond{{\rm cond}}
\def\Aut{{\rm Aut}}
\def\Ext{{\rm Ext}}
\def\Pic{{\rm Pic}}
\def\GL{{\rm GL}}
\def\an{{\rm an}}
\def\Hom{{\rm Hom}}
\def\End{{\rm End}}
\def\kk{f_{\ki}}
\def\dd{f_{\di}}
\def\ki{1}
\def\di{2}
\numberwithin{equation}{section}
\newtheorem{theorem}{Theorem}[section]
\newtheorem{lemma}[theorem]{Lemma}
\newtheorem{corollary}[theorem]{Corollary}
\newtheorem{proposition}[theorem]{Proposition}
\newenvironment{definition}[1][Definition]{\begin{trivlist}
\item[\hskip \labelsep {\bfseries #1}]}{\end{trivlist}}
\newenvironment{remark}[1][Remark]{\begin{trivlist}
\item[\hskip \labelsep {\bfseries #1}]}{\end{trivlist}}
\def\pu{\phi_u}
 \def\ltwomat{\left[\begin{array}{cc}}       \def\rtwomat{\end{array}\right]}
 \def\hom{\mathrm{Hom}}
\def\tL{\widetilde{L}}
\def\L{\Lambda}
\begin{document}
\title[Extensions of CM curves and orbit counting]{Extensions of CM elliptic curves and\\ orbit counting on the projective line}
\author{Julian Rosen and Ariel Shnidman}
\maketitle
%
%
%

\begin{abstract}
There are several formulas for the number of orbits of the projective line under the action of subgroups of $\GL_2$. We give an interpretation of two such formulas in terms of the geometry of elliptic curves, and prove a more general formula for a large class of congruence subgroups of Bianchi groups.  Our formula involves the number of walks on a certain graph called an isogeny volcano. Underlying our results is a complete description of the group of extensions of a pair of CM elliptic curves, and of a pair of lattices in a quadratic field.    
\end{abstract}

\section{Introduction}
We begin with some motivation from two well-known and elegant formulas. The first formula is for the number of orbits of $\P^1(\Q)$ under the action of the congruence subgroup $\Gamma_0(N) \subset \GL_2(\Z)$,\footnote{Our definition of $\Gamma_0(N)$ is not the traditional one, as we allow elements of determinant -1.} acting by fractional-linear transformation:
\begin{equation}\label{modular}
\#\Gamma_0(N) \backslash \P^1(\Q) = \sum_{d | N} \pu\left(\gcd(d,N/d)\right).
\end{equation}
Here $\phi_u$ is the \emph{reduced totient function}
\[
\pu(n):=\#\left(\frac{(\Z/n\Z)^\times}{\{\pm1\}}\right)=\begin{cases}\frac{\phi(n)}{2}&\text{ if }n\geq 3,\\1&\text{ if }n=1,2.\end{cases}
\]

The second formula is for the number of orbits of $\P^1(K)$ under the action of $\GL_2(\O_{K})$, where $K$ is a number field and $\O_{K}$ is its ring of integers:
\begin{equation}\label{bianchi}
\#\GL_2(\O_{K}) \backslash \P^1(K) = h.
\end{equation}  
Here, $h = h(K)$ is the size of the class group $\Pic(\O_{K})$.  Formula \eqref{bianchi} is due to Bianchi. A formula counting the number of orbits of $\P^1(K)$ under the action of congruence subgroups analogous to $\Gamma_0(N)$ is given in \cite{CA}.

We can connect these two formulas using the theory of elliptic curves.  For this, we let $E$ and $E'$ be elliptic curves over $\C$, and we consider the number $N(E,E')$ of elliptic curves on the abelian surface $A = E \times E'$ up to the action of $\Aut(A)$.  This number is finite by \cite{lenzar}, and in fact $N(E,E') = 2$ unless there exists an isogeny $\lambda : E \to E'$.  This raises the question, how do we compute $N(E,E')$ if $E$ and $E'$ are isogenous?

If $\End(E) = \Z$, then $\Hom(E,E') = \lambda \Z$, for a certain minimal isogeny $\lambda$, whose degree we will denote by $N$. Then $\Aut(A) \simeq \Gamma_0(N)$ and we have $N(E,E') =\#\Gamma_0(N) \backslash \P^1(\Q)$ \cite[Prop.\ 3.7]{paper1}.  So the number $N(E,E')$ is given by the first formula \eqref{modular}.   On the other hand, if $E$ has complex multiplication (CM) by an imaginary quadratic field $K$, we may think of $\Aut(A)$ as a subgroup of $\GL_2(K)$.  As before, we have $N(E,E') =\# \Aut(A) \backslash \P^1(K)$; see Lemma \ref{bij}.   In the special case where $E = E'$ and $\End(E) = \O_{K}$, we have $\Aut(A) \simeq \GL_2(\O_{K})$ and Bianchi's formula \eqref{bianchi} gives $N(E,E) = h$.

Our main result is a formula for $N(E,E')$ for any two elliptic curves $E,E'$ with CM by $K$.  Equivalently, we compute $\#\Aut(M) \backslash \P^1(K)$ for any lattice $M \subset K^2$.  To state the result, we define the {\it conductor} of an elliptic curve $E$, with CM by $K$, to be the index of $\End(E)$ inside the ring of integers $\O_{K}$.  Thus, if $E$ has conductor $c$, then $\End(E)$ is isomorphic to $\O_c$, the unique subring of index $c$ inside $\O_K$.  Concretely, if $E \simeq \C/\a$, for some lattice $\a$ in $K$, then $c$ is the index of the ring of multipliers $\{ \alpha \in K : \alpha \a \subset \a\}$ in $\O_K$.  

\begin{theorem}\label{main}
Let $K$ be an imaginary quadratic field whose only roots of unity are $\pm 1$.\footnote{There are analogous formulas for the two imaginary quadratic fields with more roots of unity, but we omit these cases for simplicity.}
Suppose $E$ and $E'$ are elliptic curves with complex multiplication by $K$ and of conductors $c$ and $c'$ respectively.  Define $f=\lcm(c,c')$ and $f'=\gcd(c,c')$.
Then 
\[N(E,E') = \sum_{\ell | f} h_\ell \cdot \pu\left(f/\ell\right) \sum_{g | \ell} 2^{\omega(\ell/g)}r_{K}\left(f', g, f/\ell\right).\]
Here:
\begin{itemize}
\item $\displaystyle h_\ell = \#\Pic(\O_\ell) = h\cdot \ell \prod_{p | \ell} \left(1 - \chi_{K}(p)/p\right),$\\ 
\item $\chi_{K}$ is the quadratic Dirichlet character associated to $K$, 
%
 \item $\omega(n)$ is the number of distinct prime factors of $n$, and
 
  \item  $r_{K}(a,b,N)$ is the number of cyclic subgroups of order $N$ of a fixed elliptic curve of conductor $a$ such that the quotient has conductor $b$.
  
 \end{itemize}
\end{theorem}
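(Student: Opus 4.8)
The plan is to reduce the computation of $N(E,E')$ to a count of $\Aut(M)$-orbits on $\P^1(K)$ for the lattice $M = \Hom(E,E') \oplus \O_{c'}$ (or the appropriate lattice in $K^2$), using Lemma \ref{bij}. An orbit of $\P^1(K)$ corresponds to a rank-one direct summand (a line) $L \subset M$, up to the action of $\Aut(M)$; equivalently, to an isomorphism class of short exact sequences $0 \to L \to M \to M/L \to 0$ of $\O$-lattices, where $\O$ is a suitable order. So the first step is to make precise the claim, promised in the abstract, that the group of extensions of a pair of lattices in $K$ governs this count, and to show that the set of lines in $M$ is in bijection with a disjoint union, over pairs $(L, M/L)$ of isomorphism classes of lattices with $L \oplus (M/L) \simeq M$ after tensoring suitably, of the quotient $\Ext^1_{\O}(M/L, L) / (\text{automorphisms})$. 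The conductors of $L$ and of $M/L$ are constrained: if $E$ has conductor $c$ and $E'$ has conductor $c'$, then a sub-line and quotient-line have conductors dividing $f = \lcm(c,c')$, and the index data is controlled by $f' = \gcd(c,c')$.

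The second step is to organize the sum by the conductor $\ell \mid f$ of the \emph{sub}-lattice $L$ (this is the outer sum $\sum_{\ell \mid f}$). Having fixed $\ell$, the number of isomorphism classes of sub-lattices of conductor $\ell$ sitting inside $M$ with a fixed "position" is governed by the class number $h_\ell = \#\Pic(\O_\ell)$, and the explicit formula $h_\ell = h \cdot \ell \prod_{p \mid \ell}(1 - \chi_K(p)/p)$ is the standard class number formula for non-maximal orders, which I would cite. The factor $\pu(f/\ell)$ should arise exactly as in the classical case \eqref{modular}: once the sub-line and its conductor are pinned down, the extension classes in $\Ext^1_{\O_\ell}(M/L, L)$ that are inequivalent under the residual automorphism group $\{\pm 1\}$ (here using the hypothesis that $K$ has only the roots of unity $\pm 1$) form a set of size $\pu(f/\ell) \cdot (\text{something})$; morally $\Ext^1$ of two lattices with conductor "gap" $f/\ell$ is a cyclic group of order $f/\ell$, and we quotient by $\pm 1$. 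This is where the theorem generalizes \eqref{modular}, which is the $K = \Q$, $\End(E) = \Z$ shadow of this.

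The third step is the innermost double sum $\sum_{g \mid \ell} 2^{\omega(\ell/g)} r_K(f', g, f/\ell)$, which counts the possible quotient lattices $M/L$: the quantity $r_K(a,b,N)$ counts cyclic $N$-isogenies from a conductor-$a$ curve landing on a conductor-$b$ curve, i.e. walks of a prescribed length and endpoint-type on the isogeny volcano. The factor $2^{\omega(\ell/g)}$ accounts for the multiplicity coming from the local choices at each prime dividing $\ell/g$ (there are two "directions" — ascending versus horizontal, or the two primes above a split prime — at each such prime), and $r_K(f', g, f/\ell)$ pins down how the quotient's conductor $g$ and the step size $f/\ell$ are related to the gcd $f'$. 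I would prove this by a local (prime-by-prime) analysis: decompose $M$ according to the primary decomposition of the relevant torsion/conductor data, compute the volcano structure at each prime $p$ separately (distinguishing $p$ split, inert, or ramified in $K$, which is where $\chi_K(p)$ enters $h_\ell$), and then multiply using a CRT/product argument. The main obstacle I anticipate is precisely this bookkeeping across the three invariants $\ell$ (conductor of sub), $g$ (conductor of quotient), and the extension parameter $f/\ell$ simultaneously — ensuring that the triple sum doesn't over- or under-count, i.e. that the decomposition "(line in $M$) $\leftrightarrow$ (sub-conductor $\ell$, quotient-conductor $g$, extension class mod $\pm 1$)" is genuinely a bijection and that each factor is computed with respect to the correct order $\O_\ell$ or $\O_g$. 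Verifying the base cases \eqref{modular} and \eqref{bianchi} fall out of the formula will serve as a consistency check.
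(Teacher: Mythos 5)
Your skeleton agrees with the paper's: reduce via Lemma \ref{bij} to counting $\Aut(A)$-orbits of elliptic curves on $A$, identify these with isomorphism classes of short exact sequences $0\to E_{\ki}\to A\to E_{\di}\to 0$, stratify by conductors, and evaluate the prime-local counts by walks on the isogeny volcano. But there is a central gap that your outline does not address: you must count only those extension classes whose \emph{middle term is the fixed surface} $A$, and you never say how to detect, from an extension class $\gamma\in\Ext^1(E_{\di},E_{\ki})$, which surface it produces. This is the heart of the paper. The answer is Theorem \ref{order}: under the identification $\Ext^1_{alg}(E_{\ki},E_{\di})\simeq(\C/L_{\ki}L_{\di})_{tors}$, the class of a torsion point $P$ of order $n$ has middle term $\C/\O_{nf}\oplus\tilde E/\langle P\rangle$ with $f=\lcm(\kk,\dd)$. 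Proving this requires the comparison of $\Ext^1_{\O_F}(L_{\ki},L_{\di})$ with $\Ext^1$ of $K$-lattices (Lemma \ref{lem} via an explicit projective resolution over the non-maximal order, and Corollary \ref{lemar} identifying the image with the $F/f$-torsion), plus an exterior-power argument to pin down the second summand. Without some substitute for this, your ``disjoint union over pairs $(L, M/L)$'' cannot be cut down to the single surface $A$, and the count would lump together elliptic curves on all surfaces isogenous to $A$. Note also that $\Ext^1_{\O_F}(L_{\ki},L_{\di})$ is \emph{not} cyclic: it is isomorphic to $\Hom_{\O_F}(L_{\ki},L_{\di})/\frac{F}{f}\Hom_{\O_F}(L_{\ki},L_{\di})$, a rank-two quotient; the factor $\pu(f/\ell)$ comes from counting generators of the (cyclic) kernel of the associated isogeny modulo $\pm1$, i.e.\ based cyclic isogenies with fixed kernel, not from quotienting a cyclic $\Ext^1$ by $\pm1$.

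Several of your attributions of the factors are also off in ways that would derail the bookkeeping. In the paper, $\ell$ is $\lcm(\kk,\dd)$ of the conductors of the sub \emph{and} the quotient (and $g=\gcd(\kk,\dd)$), not the conductor of the sub-lattice alone; $2^{\omega(\ell/g)}$ is purely combinatorial, counting the pairs of divisors $(k,d)$ of $f$ with $\gcd(k,d)=g$ and $\lcm(k,d)=\ell$, rather than local ascending-versus-horizontal choices on the volcano; and $h_\ell$ enters as $h_{\kk}h_{\dd}/h_g=h_\ell$, the number of pairs $(L_{\ki},L_{\di})\in\Pic(\O_{\kk})\times\Pic(\O_{\dd})$ with prescribed product class $L_{\ki}L_{\di}$, not as a count of sub-lattices of conductor $\ell$ in a fixed position. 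Your instinct to verify the formula against \eqref{modular} and \eqref{bianchi} is a good consistency check, but the proposal as written is missing the extension-theoretic input (Section \ref{extensions} of the paper) that makes the triple sum count the right thing.
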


The mysterious quantity in our formula is the function $r_K(a,b,N)$.  This is a 3-variable multiplicative function which is made completely explicit in Corollary \ref{explicit}.  For each prime $p$, the values of $r_K$ on powers of $p$ depend only on how $p$ splits in $K$, so the number $N(E,E')$ depends only on the two integers $c$ and $c'$ and the values of $\chi_K$ on primes dividing $cc'$.  The explicit formulas for $r_K(a,b,N)$ are complicated, but we may interpret these numbers as the number of walks on certain graphs called {\it $p$-isogeny volcanoes}.  The structure of the $p$-isogeny volcano makes it easy to compute the numbers $r_K(a,b,N)$, just by looking at the graph (see Theorem \ref{thformula}). A script for computing $N(E,E')$ in Sage is included with this document on arXiv.

As a corollary of Theorem \ref{main}, we obtain orbit counting formulas for large class of subgroups of $\GL_2(K)$ which are commensurable with the Bianchi group $\GL_2(\O_K)$.  Explicitly, if $E = \C/\a$ and $E' = \C/\a'$, for lattices $\a$ and $\a'$ in $K$, then $\Aut(E \times E')$ is isomorphic to the group:
\[\Gamma(\a, \a') := \left\{ \gamma  \in \left(\begin{array}{cc}\O_c & c\a(f'\a')^{-1}\\ c'\a'(f'\a)^{-1}&\O_{c'} \end{array}\right) : \det \gamma  = \pm 1\right\}. \]
We then have the following orbit counting formula:
\begin{corollary}\label{maingroup} 
\[\#\Gamma(\a,\a')\backslash \P^1(K) = \sum_{\ell | f} h_\ell \cdot \pu\left(f/\ell\right) \sum_{g | \ell} 2^{\omega(\ell/g)}r_{K}\left(f', g, f/\ell\right).\]
\end{corollary}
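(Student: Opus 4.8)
The plan is to deduce Corollary~\ref{maingroup} directly from Theorem~\ref{main}: once the left-hand side is recognized as one of the numbers $N(E,E')$, the right-hand sides of the two statements are literally the same expression and there is nothing left to do. Put $E=\C/\a$ and $E'=\C/\a'$. By the definition of conductor recalled above, $E$ and $E'$ have conductors $c$ and $c'$ --- the indices in $\O_K$ of the multiplier rings of $\a$ and $\a'$ --- so the integers $f=\lcm(c,c')$ and $f'=\gcd(c,c')$ are exactly the ones occurring in Theorem~\ref{main}. It therefore suffices to prove $\#\Gamma(\a,\a')\backslash\P^1(K)=N(E,E')$.

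For this I would chain together two facts. First, $\Gamma(\a,\a')\cong\Aut(A)$ for the abelian surface $A=E\times E'$; this is the identification recorded in the discussion preceding the corollary. It comes from writing $\End(A)$ as the matrix ring $\left(\begin{smallmatrix}\End(E)&\Hom(E',E)\\ \Hom(E,E')&\End(E')\end{smallmatrix}\right)$ inside $M_2(K)$, using the analytic descriptions $\End(E)=\{\alpha\in K:\alpha\a\subseteq\a\}=\O_c$, $\Hom(E',E)=\{\alpha\in K:\alpha\a'\subseteq\a\}$, and likewise for the other two entries, together with the observation that an element of $\End(A)$ is an automorphism precisely when its determinant is a unit of $\O_K$, which under our hypothesis on $K$ means $\det=\pm1$. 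Second, Lemma~\ref{bij} gives a bijection between the elliptic curves on $A$ modulo $\Aut(A)$ and the orbit set $\Aut(A)\backslash\P^1(K)$, so $N(E,E')=\#\Aut(A)\backslash\P^1(K)=\#\Gamma(\a,\a')\backslash\P^1(K)$. Feeding this into Theorem~\ref{main} produces the asserted double sum.

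Granting those two inputs the corollary is purely formal --- the genuine content sits in Theorem~\ref{main} itself, which the corollary merely repackages --- so the only place where real work could be hidden is the computation of the off-diagonal entries of $\End(A)$: that $\{\alpha\in K:\alpha\a'\subseteq\a\}$ equals $c\a(f'\a')^{-1}$ and, symmetrically, $\{\alpha\in K:\alpha\a\subseteq\a'\}$ equals $c'\a'(f'\a)^{-1}$, together with the fact that an automorphism is exactly a matrix of determinant $\pm1$. I would prove these by a local, ideal-theoretic calculation: reduce to the case where $\a,\a'$ are proper fractional ideals of $\O_c,\O_{c'}$, work one prime at a time, and use the standard facts $\O_c\cap\O_{c'}=\O_{\lcm(c,c')}$, $\O_c\cdot\O_{c'}=\O_{\gcd(c,c')}$, and that the conductor of $\O_n$ in $\O_K$ equals $n\O_K$. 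If the paper's structural results on extensions of CM lattices already record this description of the Hom-modules, then beyond quoting Theorem~\ref{main} and Lemma~\ref{bij} there is nothing further to prove.
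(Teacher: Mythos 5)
Your proposal is correct and follows the same route the paper takes (implicitly): identify $\Gamma(\a,\a')$ with $\Aut(E\times E')$, use Lemma \ref{bij} to equate $\#\Aut(A)\backslash\P^1(K)$ with $N(E,E')$, and quote Theorem \ref{main}. The paper gives no separate proof of this corollary beyond asserting the isomorphism $\Aut(E\times E')\cong\Gamma(\a,\a')$, so your sketch of the ideal-theoretic verification of the off-diagonal Hom-modules is a reasonable filling-in of a detail the paper leaves unproved.
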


In favorable cases, the formula in Corollary \ref{maingroup} simplifies.  For example, if $\a = \a' = \O_f$, then $\Gamma(\a,\a') = \GL_2(\O_f)$ and the right hand side becomes a simple Dirichlet convolution:

\begin{corollary}\label{gl2}
For any $f \geq 1$, 
\[\#\GL_2(\O_f)\backslash \P^1(K) = \sum_{\ell | f} h_\ell \cdot \pu(f/\ell).\]
\end{corollary}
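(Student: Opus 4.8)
The plan is to obtain the formula as the special case $\a=\a'=\O_f$ of Corollary~\ref{maingroup}, and then to check that the inner sum over $g$ collapses to $1$.

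First I would feed $\a=\a'=\O_f$ into Corollary~\ref{maingroup}. The elliptic curve $\C/\O_f$ has conductor $f$: its ring of multipliers $\{\alpha\in K:\alpha\O_f\subseteq\O_f\}$ is $\O_f$ itself, which has index $f$ in $\O_K$. Hence in the notation of the corollary $c=c'=f$, so $\lcm(c,c')=\gcd(c,c')=f$ and the outer sum runs over $\ell\mid f$ with weight $h_\ell\,\pu(f/\ell)$, as desired. It remains to identify the group: with $\a=\a'=\O_f$ and $f'=f$, the off-diagonal entry bound in $\Gamma(\a,\a')$ is $c\a(f'\a')^{-1}=f\O_f\cdot(f\O_f)^{-1}=\O_f$, so $\Gamma(\O_f,\O_f)=\{\gamma\in M_2(\O_f):\det\gamma=\pm1\}$; since $K$ has no roots of unity besides $\pm1$ we have $\O_f^\times=\{\pm1\}$, and this set is precisely $\GL_2(\O_f)$. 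Corollary~\ref{maingroup} then gives
\[
\#\GL_2(\O_f)\backslash\P^1(K)=\sum_{\ell\mid f}h_\ell\,\pu(f/\ell)\,S(\ell),\qquad S(\ell):=\sum_{g\mid\ell}2^{\omega(\ell/g)}\,r_K(f,g,f/\ell),
\]
so the corollary is equivalent to the assertion $S(\ell)=1$ for every $\ell\mid f$.

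To prove $S(\ell)=1$ I would show that among divisors $g$ of $\ell$ the term $r_K(f,g,f/\ell)$ vanishes unless $g=\ell$, in which case it equals $1$; then $S(\ell)=2^{\omega(1)}\cdot1=1$. Fix an elliptic curve $E=\C/\a$ of conductor $f$, so $\a$ is an invertible $\O_f$-ideal, and let $C\subset E$ be a cyclic subgroup of order $f/\ell$, say $C=\Lambda/\a$ with $\a\subseteq\Lambda$ and $\Lambda/\a$ cyclic of order $f/\ell$; then $E/C=\C/\Lambda$. Suppose the conductor $g$ of $\C/\Lambda$ divides $\ell$. Then $\Lambda$ is an $\O_g$-module, hence an $\O_\ell$-module since $\O_\ell\subseteq\O_g$, so $\O_\ell\a\subseteq\Lambda$; but $\a$ is $\O_f$-invertible, hence locally principal, so $[\O_\ell\a:\a]=[\O_\ell:\O_f]=f/\ell=[\Lambda:\a]$, forcing $\Lambda=\O_\ell\a$. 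Thus there is at most one such subgroup $C$. Conversely $\Lambda=\O_\ell\a$ does give one: extending scalars along $\O_f\hookrightarrow\O_\ell$ takes an invertible ideal to an invertible ideal, so $\O_\ell\a$ is an invertible $\O_\ell$-ideal, its ring of multipliers is exactly $\O_\ell$, and $\C/\O_\ell\a$ has conductor $\ell$; moreover $\O_\ell\a/\a\cong\O_\ell/\O_f\cong\Z/(f/\ell)$ is cyclic. Hence $r_K(f,\ell,f/\ell)=1$ and $r_K(f,g,f/\ell)=0$ for every proper divisor $g$ of $\ell$, so $S(\ell)=1$.

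Combining the two steps yields $\#\GL_2(\O_f)\backslash\P^1(K)=\sum_{\ell\mid f}h_\ell\,\pu(f/\ell)$. The only point requiring any care is the index and conductor bookkeeping in the third paragraph — that $[\O_\ell:\O_f]=f/\ell$, that this index is unchanged after multiplying by the invertible ideal $\a$, and that the quotient is cyclic — all standard facts about invertible ideals of orders in imaginary quadratic fields, so I do not anticipate a substantive obstacle. (Alternatively, $S(\ell)=1$ can be read off the explicit prime-by-prime values of $r_K$ in Corollary~\ref{explicit}: on $p$-powers $r_K$ counts nonbacktracking walks on the $p$-isogeny volcano, and the only walk of length $v_p(f)-v_p(\ell)$ from a depth-$v_p(f)$ vertex that never descends is the unique monotone ascent to depth $v_p(\ell)$.)
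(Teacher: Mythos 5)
Your proof is correct, and while it follows the paper's skeleton at the top level (specialize Corollary \ref{maingroup} to $\a=\a'=\O_f$ and reduce to showing $r_K(f,g,f/\ell)=1$ if $g=\ell$ and $0$ for proper divisors $g$ of $\ell$), you prove that key identity by a genuinely different route. The paper invokes multiplicativity (Lemma \ref{mult}) to reduce to prime powers and then reads the answer off the isogeny volcano via Theorem \ref{thformula}: a walk of length $a-c$ from level $a$ cannot reach level $b<c$, and the unique walk reaching level $c$ is the monotone ascent. You instead argue directly with lattices: any cyclic subgroup $\Lambda/\a$ of order $f/\ell$ whose quotient has conductor dividing $\ell$ satisfies $\Lambda\supseteq\O_\ell\a$, and the index count $[\O_\ell\a:\a]=[\O_\ell:\O_f]=f/\ell$ (valid because $\a$ is locally principal over $\O_f$) forces $\Lambda=\O_\ell\a$, which has conductor exactly $\ell$ and cyclic quotient $\O_\ell\a/\a\cong\O_\ell/\O_f$. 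Your argument is self-contained and does not depend on Section \ref{secr} at all --- it would let Corollary \ref{gl2} be stated and proved immediately after Theorem \ref{main} --- whereas the paper's proof is shorter given the volcano machinery and serves as a worked illustration of how to compute $r_K$ ``by eye.'' All the bookkeeping you flag (the index of $\O_f$ in $\O_\ell$, its invariance under multiplication by an invertible ideal, cyclicity of the quotient) is indeed standard and checks out.
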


\begin{proof}
See Section \ref{volcano}. 
\end{proof}

These results have application to other counting problems in geometry.  For example, $\#\Gamma(\a,\a')\backslash \P^1(K)$ is the number of cusps on the hyperbolic 3-manifold $\Gamma(\a,\a')\backslash \H^3$.  It is also the number of equivalence classes of contractions of the abelian surface $A = E \times E'$, in the sense of the minimal model program.  Our formula can be used to study the asymptotics of these quantities as $A$ varies, and would be helpful in individual computations as well.    

The proof of Theorem \ref{main} involves a careful study of $\Ext$-groups in the category of products of elliptic curves with complex multiplication by $K$, or equivalently, in the category of lattices in imaginary quadratic fields. These results (found in Section\ \ref{extensions}) are interesting in their own right and should find other applications.

\subsection{Acknowledgements} The second author thanks Andrew Snowden for a helpful conversation.  The second author was partially supported by NSF grant DMS-0943832.   

\section{Extensions of CM elliptic curves}\label{extensions}
Fix an imaginary quadratic number field $K\subset\C$. A \emph{$K$-lattice} is a free abelian subgroup $L\subset K^n$ of rank $2n$ (for some $n$). The quotient $\C^n/L$ is a complex torus, which is known to be algebraic.

\subsection{Singular abelian surfaces}
For context, we recall a basic fact about the abelian surfaces we are considering.  

\begin{proposition}[\cite{SM}]
\label{propequiv}
Let $A/\C$ be an abelian surface.  Then the following are equivalent:
\begin{enumerate}
\item $A \simeq \C^2/\Lambda$, with $\Lambda$ a $K$-lattice, for some imaginary quadratic field $K$.

\item $A$ is the product of two elliptic curves having CM by the same field $K$. 

\item $A$ is isogenous to a product of two elliptic curves having CM by the same field $K$.
\end{enumerate}
\end{proposition}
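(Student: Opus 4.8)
The plan is to prove the cyclic chain of implications $(2)\Rightarrow(3)\Rightarrow(1)\Rightarrow(2)$. The first two are formal manipulations with lattices and isogenies; the third is the substantive content, namely the theorem of Shioda and Mitani \cite{SM}, which I would invoke rather than reprove. Throughout I read ``$E$ has CM by $K$'' as $\End(E)\otimes\Q\cong K$, i.e.\ $\End(E)$ is an order in $K$; recall that $\C/\a$ has CM by $K$ for every lattice $\a\subset K$ (its endomorphism ring being the order $\{\alpha\in K:\alpha\a\subseteq\a\}$ of multipliers of $\a$) and that, conversely, every elliptic curve with CM by $K$ is isomorphic to such a $\C/\a$.

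The implication $(2)\Rightarrow(3)$ is immediate. For $(3)\Rightarrow(1)$: since being isogenous is a symmetric relation, we may assume there is an isogeny $\phi\colon A\to E_1\times E_2$ with $E_i\simeq\C/\a_i$ for lattices $\a_i\subset K$, so that $E_1\times E_2\simeq\C^2/(\a_1\oplus\a_2)$ with $\a_1\oplus\a_2\subset K^2$ a $K$-lattice. Lifting $\phi$ to universal covers gives a $\C$-linear automorphism $\widetilde\phi$ of $\C^2$ carrying the period lattice $\Lambda$ of $A$ isomorphically onto a subgroup of $\a_1\oplus\a_2$ of index $\deg\phi$. A finite-index subgroup of a $K$-lattice is again a $K$-lattice, so $\widetilde\phi(\Lambda)$ is a $K$-lattice in $K^2$ and $\widetilde\phi$ induces an isomorphism $A\simeq\C^2/\widetilde\phi(\Lambda)$, which is $(1)$.

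It remains to prove $(1)\Rightarrow(2)$, and this is the main obstacle. Everything up to isogeny is soft: a $K$-lattice $\Lambda\subset K^2$ is commensurable with $\O_K\oplus\O_K$ (any two full lattices in a common $\Q$-vector space are commensurable), so $\C^2/\Lambda$ is automatically isogenous to $(\C/\O_K)^2$, a product of CM elliptic curves --- this already gives $(1)\Rightarrow(3)$ for free. What lies deeper is that $\C^2/\Lambda$ is actually \emph{isomorphic} to a product, and here one cannot argue cheaply: choosing $0\neq v\in\Lambda$ and setting $\Lambda_1=\Lambda\cap Kv$ does produce, after a rank count, a sub-elliptic-curve $E_1\hookrightarrow A$ with CM by $K$ and an extension $0\to E_1\to A\to E_2\to 0$ of CM elliptic curves, but such extensions are typically non-split, so a product decomposition of $A$ must be found transversally to $E_1$. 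This is precisely what Shioda and Mitani do: they normalize $\Lambda$, via an element of $\GL_2(K)\subset\GL_2(\C)$, according to its associated positive-definite binary quadratic form (the transcendental lattice of $A$), and then write the normalized lattice explicitly as $\a_1\oplus\a_2$ with $\a_1,\a_2\subset K$ CM lattices --- equivalently, they produce CM points $\tau_1,\tau_2$ in the upper half-plane with $A\simeq\bigl(\C/(\Z+\Z\tau_1)\bigr)\times\bigl(\C/(\Z+\Z\tau_2)\bigr)$. I would not reproduce this calculation; it, rather than any formal argument, is where the maximal Picard number of $A$ is used.
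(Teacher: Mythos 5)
The paper offers no proof of this proposition at all---it is simply recalled from Shioda--Mitani \cite{SM}---and your write-up correctly reduces the statement to that same citation: the only substantive implication $(1)\Rightarrow(2)$ is exactly their theorem, and your formal arguments for $(2)\Rightarrow(3)$ and $(3)\Rightarrow(1)$ (lifting an isogeny to universal covers and noting that a finite-index subgroup of a $K$-lattice in $K^2$ is again a $K$-lattice) are correct. So your proposal is sound and takes essentially the same route as the paper, just with the easy reductions spelled out.
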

\noindent If these equivalent conditions hold, we say that $A$ is {\it singular} and that $A$ has \emph{CM by $K$}.   



\subsection{Rank $1$ $K$-lattices}
Recall that an \emph{order} in the quadratic field $K$ is a subring $R\subset\O_{K}$ such that $\mathrm{Frac}(R)=K$. Every order has the form $R=\Z+f\O_{K}$ for a unique positive integer $f$, called the \emph{conductor} of $R$. For each lattice $L \subset K^n$, the \emph{ring of multipliers} $R(L)$ is the order $\{ \alpha \in K : \alpha L \subset L\}$.  The conductor of $L$ is defined to be the conductor of $R(L)$.

If $\a \subset K$ is a rank 1 $K$-lattice, then $\a$ is projective as an $R(\a)$-module. Two rank 1 $K$-lattices $\a$ and $\a'$ are \emph{homothetic} if $\a = \gamma \a'$ for some $\gamma \in K^\times$.  The set of homothety classes of lattices of conductor $f$ forms a group under multiplication of lattices, which is denoted $\Pic(\O_f)$.  The set of homothety classes of lattices in $K$ is therefore in bijection with $\coprod_{f \geq 1} \Pic(\O_f)$.  If $E = \C/\a$, then we define the conductor of $E$, denoted $\cond(E)$, to be the conductor of $\a$. 

\begin{proposition}
\label{propcan}
Let $A$ be a singular abelian surface with CM by $K$. Then there is a positive integer $f$ and a lattice $\a\subset K$ of conductor $f'$ dividing $f$ such that
\[
A \cong \C/\O_f \oplus \C/\a.
\]
Moreover, the integers $f$, $f'$ and the class $[\a] \in \Pic(\O_{f'})$ are uniquely determined by these conditions. In particular, 
\[
A \mapsto (f/f', [\a])
\]
is a bijection between the set of isomorphism classes of singular abelian surfaces with CM by $K$ and pairs $(g, [\a])$, where $g \geq 1$ and $[\a]$ is a homothety class of lattices in $K$.  
\end{proposition}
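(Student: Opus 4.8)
The plan is to translate the statement into the language of $K$-lattices, prove existence of the normal form by reducing an arbitrary product decomposition to the desired shape, and then prove uniqueness by exhibiting $f$, $f'=\cond(\a)$ and $[\a]$ as manifest isomorphism invariants.

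\emph{Reduction to lattices.} Any isomorphism $\C^2/\Lambda \to \C^2/\Lambda'$ of complex tori lifts to a $\C$-linear automorphism $g$ of $\C^2$ with $g\Lambda=\Lambda'$. Since $\Lambda\otimes_\Z\Q=\Lambda'\otimes_\Z\Q=K^2$ as $\Q$-subspaces of $\C^2$, and since a $\C$-linear map is automatically $K$-linear, $g$ lies in $\GL_2(K)$. Hence two singular surfaces with CM by $K$ are isomorphic if and only if the associated rank $2$ $K$-lattices are $\GL_2(K)$-equivalent, and any construction attached $\GL_2(K)$-equivariantly to $\Lambda$ — the ring of multipliers $R(\Lambda)$, the $\O_K$-module $\O_K\Lambda$, the finite group $(\O_K\Lambda)/\Lambda$, and exterior powers over intermediate orders — descends to an invariant of $A$. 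So it suffices to show that every rank $2$ $K$-lattice $\Lambda$ is $\GL_2(K)$-equivalent to one of the form $\O_f\oplus\a$ with $\cond(\a)\mid f$, and that the integers $f$, $\cond(\a)$ and the class $[\a]\in\Pic(\O_{\cond(\a)})$ are determined by the equivalence class.

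\emph{Existence.} By Proposition \ref{propequiv}, $A$ is a product of two CM elliptic curves, so $\Lambda\cong\a_1\oplus\a_2$ with $\a_i\subset K$ of rank $1$ and conductor $c_i$. Put $f=\lcm(c_1,c_2)$ and $f'=\gcd(c_1,c_2)$; note $R(\a_1\oplus\a_2)=\O_{c_1}\cap\O_{c_2}=\O_f$, so the first entry of the normal form is forced. I would normalize in two moves. \textbf{(i)} A conductor-sorting move: $\a_1\oplus\a_2\cong\a_1'\oplus\a_2'$ with $\cond(\a_1')=f$ and $\cond(\a_2')=f'$. Because the two sides must agree away from any fixed prime, this reduces to a purely local statement about finitely generated torsion-free modules over the non-maximal orders $\Z_p+p^k\O_{K,p}$, where one produces by hand, at the finitely many relevant primes, an explicit change of basis interchanging the two local summands whenever one local conductor properly divides the other; globalizing gives the required element of $\GL_2(K)$. \textbf{(ii)} A Steinitz move: now $\a_1'$ is invertible over $\O_f$ and $\a_2'$ is an $\O_f$-module (as $\O_f\subseteq\O_{f'}$), so the usual argument that the Steinitz class is the only obstruction yields $\a_1'\oplus\a_2'\cong\O_f\oplus\a$, where $\a:=\a_1'\a_2'\subset K$ represents $\a_1'\otimes_{\O_f}\a_2'$; and $\cond(\a)=\gcd(f,f')=f'$ divides $f$. (One can also track Steinitz classes to see that $[\a]$ corresponds to $[\a_1][\a_2]$ under the natural maps among the relevant Picard groups; alternatively, the decomposition is a special case of Borevich--Faddeev's classification of modules over quadratic orders.) The technical heart — and the only step that is not formal — is the local analysis in move (i).

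\emph{Uniqueness and the bijection.} Suppose $\Lambda\cong\O_f\oplus\a$ with $f':=\cond(\a)$ dividing $f$. Then $f$ equals the conductor of $\Lambda$, since $R(\O_f\oplus\a)=\O_f\cap\O_{f'}=\O_f$. Next, $(\O_K\Lambda)/\Lambda\cong(\O_K/\O_f)\oplus(\O_K\a/\a)\cong\Z/f\Z\oplus\Z/f'\Z$ as abelian groups — the second summand because $\a$ is locally free over $\O_{f'}$, whence $[\O_K\a:\a]=[\O_K:\O_{f'}]=f'$ and the quotient is cyclic at every prime — and since $f'\mid f$ this is already in invariant-factor form, so the invariant factors $(f',f)$, hence $f'$ and $f/f'$, are determined by $A$. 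Finally $\O_{f'}\Lambda\cong\O_{f'}\oplus\a$ with both summands invertible over $\O_{f'}$, so $\wedge^2_{\O_{f'}}(\O_{f'}\Lambda)\cong\a$, and thus $[\a]\in\Pic(\O_{f'})$ is determined by $A$. Since $f/f'$ and $[\a]$ are both invariants and existence shows every singular $A$ with CM by $K$ arises, the rule sending a pair $(g,[\a])$ with $[\a]\in\Pic(\O_{f'})$ to $\C/\O_{gf'}\oplus\C/\a$ is a well-defined bijection onto the set of isomorphism classes, inverse to $A\mapsto(f/f',[\a])$.
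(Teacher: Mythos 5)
Your proposal is correct, and it reaches the same normal form as the paper, but it does substantially more work: the paper's proof is essentially a two-line citation, writing $E\times E'\simeq \C/\O_f\times\C/\a\a'$ with $f=\lcm(c,c')$ and then quoting Shioda--Mitani and Kani both for this product formula and for the criterion that $\C^2/(\O_f\oplus\a)\simeq\C^2/(\O_f\oplus\b)$ iff $\a$ and $\b$ are homothetic. You instead re-derive the existence statement by splitting the Shioda--Mitani isomorphism into a genus-sorting step and a Steinitz step, and you prove uniqueness by exhibiting explicit $\GL_2(K)$-equivariant invariants: the invariant factors of $(\O_K\Lambda)/\Lambda$ recover $(f',f)$, and $\wedge^2_{\O_{f'}}(\O_{f'}\Lambda)$ recovers $[\a]\in\Pic(\O_{f'})$. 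That uniqueness argument is sound and genuinely self-contained (and the $\wedge^2$ device is the same one the paper uses later in the proof of Theorem \ref{order}), so it is a real improvement in transparency over a bare citation. The trade-off is in existence: your step (i) -- that $\a_1\oplus\a_2\cong\a_1'\oplus\a_2'$ with conductors sorted to $\lcm$ and $\gcd$ -- is exactly the content of the cited Shioda--Mitani/Kani theorem, and your treatment of it ("an explicit change of basis ... globalizing gives the required element of $\GL_2(K)$") is a sketch, not a proof. The local statement is easy (locally at $p$ each $\a_i$ is free over its local order, so the local isomorphism class of the sum depends only on the multiset of local conductors), but the globalization is precisely the assertion that genus plus a single ideal-class invariant classifies rank-$2$ lattices over these non-maximal orders; that is the Borevich--Faddeev/Bass classification you name in passing. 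Likewise your step (ii) applies "the usual Steinitz argument" to a pair in which only one summand is invertible over $\O_f$; the statement $\a\oplus M\cong \O_f\oplus\a M$ for $\a$ invertible is true (it is Kani's Lemma 15, which the paper invokes elsewhere), but it is not literally the Dedekind-domain Steinitz theorem and deserves its own local argument. So your route is valid and more informative on uniqueness, but as written it has merely traded the paper's citation for a different (and somewhat heavier) external classification theorem at the existence step; to make it fully self-contained you would need to actually carry out the local-to-global argument in (i) and justify the generalized Steinitz move in (ii).
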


\begin{proof}
Since $A$ is singular with CM by $K$, we may write $A = E \times E'$ with $E = \C/\a$ and $E' = \C/\a'$ for rank 1 $K$-lattices $\a$ and $\a'$.  Let $c$ and $c'$ be the conductors of these elliptic curves.  Then 
$E \times E' \simeq \C/\O_f \times \C/\a\a'$, where $f = \lcm(c_1,c_2)$.  Moreover, if two lattices $\a, \b \subset K$ have multiplication by $\O_f$, then $\C^2/(\O_f \oplus \a) \simeq \C^2/(\O_f \oplus \b)$ if and only if $\a$ and $\b$ are homothetic (see \cite{SM} or \cite{kaniCM}).      
\end{proof}


\subsection{Extensions of rank $1$ lattices}
\label{secext}
Let $L_{\ki}, L_{\di} \subset K$ be lattices of conductors $f_{\ki}$ and $f_{\di}$, and let $E_i = \C/L_i$ ( for $i=1,2$) be the corresponding elliptic curves.  Suppose also that $L_{\ki}=\Z+\Z\tau_{\ki}$ and $L_{\di}=\Z+\Z\tau_{\di}$ for elements $\tau_{\ki}$ and $\tau_{\di}$ of $K$; up to homothety, we may always choose such a basis.
\begin{proposition}\cite[I.5.7 and I.6.2]{bl}
\label{propmat}
The association
\begin{equation}
z\mapsto\frac{\C^2}{\begin{pmatrix} \tau_{\di} & 1 & z & 0\\ 0 & 0 & \tau_{\ki} & 1\end{pmatrix}}
\end{equation}
induces an isomorphism between the group $\tilde E := \C/L_{\ki}L_{\di}$ and the group $\Ext^1_\an(E_{\ki},E_{\di})$ of equivalence classes of extensions of complex tori.  Under this bijection, the subgroup of torsion points in $\tilde E$ corresponds to the subgroup $\Ext^1_{alg}(E_{\ki},E_{\di})$ of algebraic extensions.
\end{proposition}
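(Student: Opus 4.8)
The plan is to reduce the assertion to concrete lattice bookkeeping for extensions of complex tori, and then to carve out, inside that description, precisely the abelian varieties.

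\textbf{Step 1: extensions of complex tori are linear algebra.} Any extension $0 \to E_{\di} \to X \to E_{\ki} \to 0$ of complex tori is itself a complex torus $X = \C^2/\Lambda$, fitting into compatible exact sequences $0 \to V_2 \to \C^2 \to V_1 \to 0$ of $\C$-vector spaces and $0 \to L_{\di} \to \Lambda \to L_{\ki} \to 0$ of lattices. The vector-space sequence splits $\C$-linearly, so I would fix an identification $\C^2 = V_2 \oplus V_1$; then $\Lambda$ is generated by $L_{\di} \oplus 0$ together with a choice of lift to $\Lambda$ of each of the two basis elements $\tau_{\ki}$ and $1$ of $L_{\ki}$.

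\textbf{Step 2: the parametrization and the group isomorphism.} Using a unipotent automorphism $(a,b) \mapsto (a + cb, b)$ of $\C^2$ to adjust the splitting, I would normalize the lift of $1 \in L_{\ki}$ to be $(0,1)$; the lift of $\tau_{\ki}$ is then $(z, \tau_{\ki})$, so that the lattice generated by $L_{\di}\oplus 0$, $(0,1)$, and $(z,\tau_{\ki})$ is exactly the one in the statement. Surjectivity of $z \mapsto X_z$ is built into this normal form. For injectivity and well-definedness one checks that the residual freedom consists of (i) translating the lift of $\tau_{\ki}$ by $L_{\di} \oplus 0$, and (ii) an equivalence of extensions, i.e.\ a unipotent automorphism as above fixing the class of $(0,1)$, which forces $c \in L_{\di}$ and shifts $z$ by $c\tau_{\ki}$; hence $z$ is pinned down modulo $L_{\di} + \tau_{\ki} L_{\di} = L_{\ki}L_{\di}$. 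That $z \mapsto [X_z]$ is a homomorphism (Baer sum of extensions $\leftrightarrow$ addition of $z$) I would verify directly from the explicit lattices, the construction being affine-linear in $z$. This gives $\Ext^1_\an(E_{\ki},E_{\di}) \cong \C/L_{\ki}L_{\di} = \tilde E$.

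\textbf{Step 3: algebraic $=$ torsion.} Since an algebraic extension is in particular analytic, $\Ext^1_{alg}(E_{\ki},E_{\di}) \subseteq \Ext^1_\an(E_{\ki},E_{\di})$, and it remains to show this subgroup is exactly the torsion subgroup of $\tilde E$, which — because $L_{\ki}L_{\di}$ is a lattice in $K$ — is $K/L_{\ki}L_{\di}$, i.e.\ the image of $K \subset \C$. For one inclusion: if $X_z$ is an abelian variety, Poincaré complete reducibility yields an abelian subvariety $E' \subseteq X_z$ mapping isogenously onto $E_{\ki}$; the extension pulled back along $E' \to E_{\ki}$ acquires a section, hence splits, so $n[X_z] = 0$ for $n = \deg(E' \to E_{\ki})$ and $z$ is torsion. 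For the reverse inclusion: if $nz \in L_{\ki}L_{\di}$ then $z \in K$, so $mz \in L_{\di}$ for some $m \geq 1$, and $\Lambda' := \Z(\tau_{\di},0) + \Z(1,0) + \Z(0,m\tau_{\ki}) + \Z(0,1)$ is a finite-index sublattice of both $\Lambda_z$ and of the split lattice $\Lambda_0$; thus $X_z$ is isogenous to $E_{\ki} \times E_{\di}$, and since a complex torus isogenous to an abelian variety is an abelian variety (pull back a polarization), $X_z$ is algebraic.

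\textbf{Anticipated main obstacle.} Steps 1--2 are essentially formal. The substance is in Step 3: one must know that analytic extensions need not be algebraic and locate the precise algebraic locus. Given Poincaré reducibility and the isogeny-invariance of the abelian-variety property, both inclusions are short, but some care goes into checking that the common sublattice $\Lambda'$ genuinely realizes the asserted isogeny and, in the reverse direction, that the pulled-back extension truly splits. Alternatively, Step 3 can be carried out by writing down the Néron--Severi group of $X_z$ and determining when it contains a positive class, which recovers the same condition $z \in K$.
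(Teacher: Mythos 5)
Your argument is correct, but note that the paper does not prove this proposition at all: it quotes it from Birkenhake--Lange [BL, I.5.7 and I.6.2], so the comparison is with that reference rather than with anything in the text. Your Steps 1--2 reproduce the standard computation behind [BL, I.5.7]: an extension of complex tori splits on tangent spaces, the lattice is determined by lifts of the basis $\{1,\tau_{\ki}\}$ of $L_{\ki}$, and after normalizing the lift of $1$ to $(0,1)$ the remaining datum is $z$ modulo $L_{\di}+\tau_{\ki}L_{\di}=L_{\ki}L_{\di}$; this is exactly the identification of $\Ext^1_{\an}(E_{\ki},E_{\di})$ with $\Hom_{\Z}(L_{\ki},\C)$ modulo $\C$-linear maps and $\Hom_{\Z}(L_{\ki},L_{\di})$, made explicit, and your bookkeeping of the residual freedom (translation by $L_{\di}$ and unipotent equivalences with $c\in L_{\di}$) is right. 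Where you genuinely diverge is Step 3: [BL, I.6.2] locates the algebraic classes Hodge-theoretically, whereas you argue by isogeny --- for torsion $z$ the lattice $\Lambda_z$ shares the finite-index sublattice $\Lambda'$ with the split lattice (your verification that $(0,m\tau_{\ki})\in\Lambda_z$ is the key point and is correct), and conversely Poincar\'e reducibility produces a complement $E'$ along which the extension splits. This buys a short, self-contained proof avoiding Riemann relations. The one step you should write out is the deduction that a split pullback along the degree-$n$ isogeny $\phi\colon E'\to E_{\ki}$ forces $n[X_z]=0$: it uses the dual isogeny, $\phi\circ\widehat{\phi}=[n]$, together with additivity of $\Ext^1(-,E_{\di})$, so that $n[X_z]=\widehat{\phi}^{\,*}\phi^*[X_z]=0$. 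With that line added the argument is complete.
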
  

\begin{remark}
There is a similar result in \cite[Thm.\ 6.1]{papram}, attributed to Lichtenbaum, but the result is not stated correctly there.   
\end{remark}

Proposition \ref{secext} shows that $\Ext^1_{alg}(E_{\ki},E_{\di}) \simeq (\Q/\Z)^2$ as a group.  But it is not clear which (or how many) extension classes correspond to some fixed abelian surface. The following theorem gives this extra information.
\begin{theorem}\label{order}
Let $P \in \tilde E$ be a torsion point of order $n \geq 1$, and let
\[0 \to E_{\di} \to S \to E_{\ki} \to 0\]
be the corresponding extension given by Proposition $\ref{secext}$.  Then \[
S \simeq \C/\O_{nf} \oplus \tilde E/\langle P\rangle,
\] where $f= \lcm(\kk,\dd)$.    
\end{theorem}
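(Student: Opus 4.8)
The plan is to compute the abelian surface $S$ directly from the explicit lattice presentation in Proposition~\ref{propmat} and then identify it using the uniqueness in Proposition~\ref{propcan}. Write $\tilde E = \C/L_{\ki}L_{\di}$, let $P \in \tilde E$ be a torsion point of order $n$, and lift $P$ to some $z \in K$ with $nz \in L_{\ki}L_{\di}$ but $mz \notin L_{\ki}L_{\di}$ for $0 < m < n$. By Proposition~\ref{propmat}, the extension $S$ is the complex torus $\C^2$ modulo the lattice $\Lambda$ spanned by the columns of $\begin{pmatrix} \tau_{\di} & 1 & z & 0 \\ 0 & 0 & \tau_{\ki} & 1 \end{pmatrix}$; that is, $\Lambda$ is generated by $(\tau_{\di},0)$, $(1,0)$, $(z,\tau_{\ki})$, $(0,1)$. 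First I would observe that the projection to the second coordinate sends $\Lambda$ onto $L_{\ki} = \Z + \Z\tau_{\ki}$ with kernel $L_{\di} = \Z + \Z\tau_{\di}$ in the first coordinate, recovering the extension structure $0 \to E_{\di} \to S \to E_{\ki} \to 0$.

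The heart of the argument is to put $\Lambda$ in a normal form adapted to Proposition~\ref{propcan}. I would perform a change of basis over $\Z$ (i.e., act by $\mathrm{GL}_4(\Z)$ on the right) to clear the first row: subtract integer multiples of $(1,0)$ and $(0,1)$ from $(z,\tau_{\ki})$ using that $L_{\ki}L_{\di}$ contains $1$ (after scaling so that both $L_i \ni 1$), so that we may reduce $z$ modulo $L_{\ki}L_{\di}$ to a fixed representative. The key point is that, since $P$ has order exactly $n$, the subgroup of the first coordinate $\C$ generated by $L_{\di}$ together with all elements $z'$ appearing (after the reduction, and after combining with the lattice $L_{\ki}$ acting through the second coordinate) is precisely $\tfrac1n L_{\di}$ — no, more carefully: the sublattice one extracts in the first factor is $L_{\ki}L_{\di} + \Z z = \tfrac{1}{n} L_{\ki} L_{\di}$ by the order hypothesis, while the piece that survives to the second factor is still $L_{\ki}$. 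So, after a suitable $\C^2$-linear automorphism, $\Lambda$ becomes block-upper-triangular with diagonal blocks $\tfrac1n L_{\ki}L_{\di}$ and $L_{\ki}$, up to a gluing that records the class of $P$. This identifies $S$, up to isogeny-type bookkeeping, with a product whose two factors are $\C/(\tfrac1n L_{\ki}L_{\di}) \cong \tilde E/\langle P\rangle$ (note $\tfrac1n L_{\ki}L_{\di}/L_{\ki}L_{\di}$ is cyclic of order $n$ only after the right normalization — I should instead realize $\tilde E/\langle P\rangle$ as $\C/L'$ where $L' = L_{\ki}L_{\di} + \Z z$) and $\C/L_{\ki}$, but with conductors possibly changed by the gluing.

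Then I would apply Proposition~\ref{propcan}: $S$ is a singular abelian surface with CM by $K$, so $S \cong \C/\O_g \oplus \C/\b$ for a unique $g$ and a unique homothety class $[\b]$ of conductor dividing $g$. It remains to show $g = nf$ with $f = \lcm(\kk,\dd)$ and $[\b] = [\tilde E/\langle P\rangle]$. The conductor computation is the place where the order of $P$ enters decisively: the ring of multipliers of $L'= L_{\ki}L_{\di}+\Z z$ has conductor equal to the conductor of $L_{\ki}L_{\di}$ (namely $\lcm(\kk,\dd)$, since the product of lattices of conductors $\kk,\dd$ has conductor $\lcm$) — this handles the $\b$-factor — while the "$\O_f$-part" of the original $\O_f \oplus (\text{something})$ decomposition gets its conductor multiplied by $n$ because adjoining the order-$n$ glue vector forces the multiplier ring of the first factor to shrink by index $n$. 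Concretely, I expect to show that the multiplier ring of the sublattice of $\Lambda$ projecting to the first factor (after normalizing the second factor to $\O_f$, using Proposition~\ref{propcan} applied to $E_{\ki}\times E_{\di}$ to arrange $L_{\ki} \leftrightarrow \O_f$) has conductor exactly $nf$.

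The main obstacle will be the conductor bookkeeping in the last step — tracking how the order-$n$ gluing vector $z$ interacts with the multiplier rings once one normalizes $E_{\ki}\times E_{\di} \cong \C/\O_f \oplus \C/\a$ via Proposition~\ref{propcan}, and verifying cleanly that this replaces $\O_f$ by $\O_{nf}$ while leaving the other factor homothetic to $\tilde E/\langle P\rangle$. A careful choice of $\Z$-basis for $L_{\ki}L_{\di}$ containing a generator of $\tfrac1n L_{\ki}L_{\di}/L_{\ki}L_{\di}$ that maps to the image of $z$, combined with the structure of orders in $K$ (every order is $\Z + f\O_K$), should make this a finite check, but it is the step that genuinely uses the hypothesis $\mathrm{ord}(P) = n$ rather than just $P$ torsion.
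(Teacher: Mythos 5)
Your overall strategy (write down the explicit period lattice from Proposition \ref{propmat}, then invoke the uniqueness in Proposition \ref{propcan}) matches the paper's opening move, but the two steps that carry the actual content are missing or wrong. First, the conductor. You assert that adjoining the order-$n$ gluing vector ``forces the multiplier ring of the first factor to shrink by index $n$,'' but give no argument, and a direct attack on the rank-$4$ lattice is genuinely delicate. The paper does no conductor bookkeeping on the period matrix at all: it computes $\Ext^1_{\O_F}(L_1,L_2)\cong \Hom_{\O_F}(L_1,L_2)/(F/f)\Hom_{\O_F}(L_1,L_2)$ via an explicit projective $\O_F$-resolution (Lemma \ref{lem}), deduces that $\Ext^1_{\O_F}$ is exactly the $F/f$-torsion in $\Ext^1$ of $K$-lattices (Corollary \ref{lemar}), and concludes that an extension class of order $n$ has middle lattice of conductor exactly $n\cdot\lcm(\kk,\dd)$ (Lemma \ref{lemorder}). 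Without something equivalent to this chain, the identification $N=nf$ is unproved. Moreover, two of your supporting assertions are false: the product of lattices of conductors $\kk,\dd$ has ring of multipliers containing $\O_{\kk}\cdot\O_{\dd}=\O_{\gcd(\kk,\dd)}$, so its conductor divides $\gcd(\kk,\dd)$, not $\lcm$; and the conductor of $L_{\ki}L_{\di}+\Z z$ is in general \emph{not} that of $L_{\ki}L_{\di}$ (take $L_{\ki}=L_{\di}=\O_K=\Z+\Z\omega$ and $z=\omega/n$: the conductor jumps from $1$ to $n$).

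Second, identifying the complementary factor. A $\GL_4(\Z)$ change of basis making $\Lambda$ block-upper-triangular does not yield a direct-sum decomposition, and your claim that the diagonal blocks are $L_{\ki}L_{\di}+\Z z$ and $L_{\ki}$ ``up to a gluing'' is precisely what needs proof. The paper's device here is the second exterior power: writing $L\cong\O_N\oplus\a$, the quotient of $\bigwedge^2_{\O_N}L$ by its torsion recovers $\a$, and the torsion-free part of $\bigwedge^2_{\O_N}L$ is spanned by the $2\times2$ minors of the period matrix, which generate exactly $L_{\ki}L_{\di}+\Z z$; hence $\a$ is the lattice of $\tilde E/\langle P\rangle$. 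This determinant argument is the missing idea, and without it (or a rigorous substitute) your normal-form computation does not close.
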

In the proof, we consider  several different notions of extension.
\begin{itemize}
\item For each positive integer $F$ divisible by $f$, the lattices $L_{\ki}$ and $L_{\di}$ can be considered as $\O_F$-modules, and we have the group $\Ext^1_{\O_F}(L_{\ki},L_{\di})$ of extensions of $\O_F$-modules
\item We have the group $\Ext^1(L_{\ki},L_{\di})$ of extensions of $K$-lattices
\item We have the group $\Ext^1_{alg}(\C/L_{\ki},\C/L_{\di})$ of extensions  of abelian varieties
\item We have the group $\Ext^1_{an}(\C/L_{\ki},\C/L_{\di})$ of extensions of complex tori.
\end{itemize}
An extension of modules determines an extension of $K$-lattices, which determines an extension of abelian varieties, which determines an extension of complex tori, so there is a sequence of group homomorphisms
\begin{gather}
\label{gammamap}\Ext^1_{\O_F}(L_{\ki},L_{\di})\xrightarrow{\gamma_{1}} \Ext^1(L_{\ki},L_{\di})\xrightarrow{\gamma_{2}}\Ext^1_{alg}(\C/L_{\ki},\C/L_{\di})\\\xrightarrow{\gamma_3}\Ext^1_{an}(\C/L_{\ki},\C/L_{\di}).
\end{gather}
It is not hard to see that $\gamma_{1}$, $\gamma_{2}$, and $\gamma_3$ are injective. It is well-known that $\gamma_{2}$ is an isomorphism; indeed, there is an equivalence of categories between $K$-lattices and abelian varieties isogenous to a product of elliptic curves with CM by $K$. Proposition \ref{propmat} implies the image of $\gamma_3$ is the torsion subgroup of $\Ext^1_{an}(\C/L_{\ki},\C/L_{\di})$.

\begin{lemma}\label{lem}
For any $F$ divisible by $f=\lcm(\kk,\dd)$, there is an isomorphism of $\O_F$-modules
\[
\Ext^1_{\O_F}(L_{\ki},L_{\di})\cong\frac{\hom_{\O_F}(L_{\ki},L_{\di})}{\frac{F}{f} \hom_{\O_F}(L_{\ki},L_{\di})}.
\]
\end{lemma}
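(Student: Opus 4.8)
The plan is to compute both sides of the claimed isomorphism and then match them. First I would record the elementary description of the left-hand $\Hom$: any $\O_F$-linear map $L_1\to L_2$ extends $\Q$-linearly, hence $K$-linearly, to a map $K\to K$, so it is multiplication by an element $x\in K$ with $xL_1\subseteq L_2$; thus
\[
\Hom_{\O_F}(L_1,L_2)=(L_2:L_1):=\{x\in K:xL_1\subseteq L_2\}=:M,
\]
a rank-$1$ $K$-lattice, and $M/\tfrac{F}{f}M$ is a finite $\O_F$-module of order $(F/f)^2$. If $f_1=F$ then also $f=F$ (as $f_1\mid f\mid F$), the multiplier ring of $L_1$ is $\O_F$, so $L_1$ is projective over $\O_F$ (recall a rank-$1$ $K$-lattice is projective over its multiplier ring), whence $\Ext^1_{\O_F}(L_1,L_2)=0=M/\tfrac{F}{f}M$; so from now on I assume $f_1<F$, which makes $L_1$ non-projective over $\O_F$.

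The next step uses that $\O_F$ is a hypersurface. Writing $\O_K=\Z[\omega]$ and $\pi=F\omega$, we have $\O_F=\Z[\pi]\cong\Z[x]/(h(x))$ with $h(x)=x^2-\Tr(\pi)x+\Nm(\pi)\in\Z[x]$ irreducible and $\Z[x]$ regular. Choosing a presentation $\O_F^2\xrightarrow{\overline\Phi}\O_F^2\to L_1\to 0$ of $L_1$ by two generators and two relations (possible since $L_1$ and its relation module are each $\Z$-free of rank $2$, hence $2$-generated over $\O_F$), a determinant count (the cokernel $L_1$ is a faithful $\O_F$-module supported on $V(h)$, and $\Z[x]$ is a UFD) forces a lift $\Phi$ of $\overline\Phi$ to $\Z[x]$ to have $\det\Phi=\pm h(x)$, so there is a matrix $\Psi$ over $\Z[x]$ with $\Phi\Psi=\Psi\Phi=h(x)I_2$; this is a matrix factorization of $h$, and the minimal free resolution of $L_1$ over $\O_F$ is the $2$-periodic complex
\[
\cdots\xrightarrow{\ \overline\Psi\ }\O_F^2\xrightarrow{\ \overline\Phi\ }\O_F^2\xrightarrow{\ \overline\Psi\ }\O_F^2\xrightarrow{\ \overline\Phi\ }\O_F^2\longrightarrow L_1\longrightarrow 0 .
\]
Here $\Phi$ and $\Psi=\pm\operatorname{adj}\Phi$ can be written out in closed form in terms of $f_1$ and $\pi$. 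Applying $\Hom_{\O_F}(-,L_2)$ turns this into the $2$-periodic complex $L_2^2\xrightarrow{\overline\Phi^{\mathrm T}}L_2^2\xrightarrow{\overline\Psi^{\mathrm T}}L_2^2\xrightarrow{\overline\Phi^{\mathrm T}}\cdots$, so
\[
\Ext^1_{\O_F}(L_1,L_2)\cong\ker\!\bigl(\overline\Psi^{\mathrm T}\colon L_2^2\to L_2^2\bigr)\big/\operatorname{im}\!\bigl(\overline\Phi^{\mathrm T}\colon L_2^2\to L_2^2\bigr);
\]
equivalently, with $\Omega=\operatorname{coker}\overline\Psi$ the first syzygy of $L_1$ (again a rank-$1$ $K$-lattice), the short exact sequence $0\to\Omega\to\O_F^2\to L_1\to 0$ together with $\Ext^1_{\O_F}(\O_F^2,L_2)=0$ gives $\Ext^1_{\O_F}(L_1,L_2)\cong\operatorname{coker}\!\bigl(\Hom_{\O_F}(\O_F^2,L_2)\to\Hom_{\O_F}(\Omega,L_2)\bigr)$.

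It remains to evaluate this cokernel and identify it, as an $\O_F$-module, with $M/\tfrac{F}{f}M$. I would make $\Omega$ explicit (a rank-$1$ $K$-lattice whose class and conductor are read off from the matrix factorization), so that $\Hom_{\O_F}(\Omega,L_2)=(L_2:\Omega)$ and the image of $L_2^2$ in it become concrete lattices, and then compare the quotient with $M/\tfrac{F}{f}M$. Since $\Ext^1_{\O_F}(L_1,L_2)$ is a finite $\O_F$-module, the comparison can be carried out one prime $p$ at a time, after tensoring with $\Z_p$, and is nontrivial only for $p\mid F/f$; there the explicit matrix factorization over the local hypersurface $\O_F\otimes\Z_p=\Z_p[\pi]$ reduces it to a short calculation, split into the three cases according to whether $p$ is split, inert, or ramified in $K$. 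This is exactly where $f=\lcm(f_1,f_2)$ enters: the data of $\Phi,\Psi$ encode $v_p(f_1)$ while $L_2$ encodes $v_p(f_2)$, and the saturation controlling the index is governed by $\max\bigl(v_p(f_1),v_p(f_2)\bigr)=v_p(f)$, leaving a quotient of order $(F/f)^2$ with the correct module structure. I expect this last local computation to be the main obstacle; everything before it is formal homological algebra and standard facts about hypersurface rings. (Alternatively, one could use Proposition~\ref{propmat} and the injectivity of $\gamma_1$ to realize $\Ext^1_{\O_F}(L_1,L_2)$ as the subgroup of those $z\in K/L_1L_2$ whose extension lattice admits a compatible $\O_F$-structure, and compute that subgroup directly — but this runs into essentially the same arithmetic.) Reassembling the local computations over $p\mid F/f$ yields the stated isomorphism of $\O_F$-modules.
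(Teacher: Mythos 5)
Your overall strategy is sound and is in fact close in spirit to the paper's: both arguments exploit the fact that $\O_F$ is a one\--dimensional Gorenstein ring cut out by one equation in a regular ring, so that $L_{\ki}$ admits an (eventually) $2$\--periodic resolution, and both then compute $\Ext^1$ as the first cohomology of the complex obtained by applying $\Hom(-,L_{\di})$. The identification $\Hom_{\O_F}(L_{\ki},L_{\di})=\{x\in K: xL_{\ki}\subseteq L_{\di}\}$ and the reduction of $\Ext^1_{\O_F}(L_{\ki},L_{\di})$ to $\coker\bigl(\Hom(\O_F^2,L_{\di})\to\Hom(\Omega,L_{\di})\bigr)$ are correct (modulo some care in justifying $\det\Phi=\pm h$ over the non\--local ring $\Z[x]$, which is fixable). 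The paper's variant resolves $L_{\ki}$ by $\tL_{\ki}^2$ for an explicit sublattice $\tL_{\ki}\subset L_{\ki}$ with ring of multipliers $\O_F$ satisfying $\O_{\kk}\tL_{\ki}=L_{\ki}$, which makes the periodic matrices completely explicit; your matrix\--factorization packaging is a legitimate alternative way to produce the same kind of resolution.

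The genuine gap is that the heart of the lemma is exactly the step you defer. Everything up to the cokernel description is, as you say yourself, formal homological algebra; the actual content is the identification of that cokernel with $\Hom_{\O_F}(L_{\ki},L_{\di})/\tfrac{F}{f}\Hom_{\O_F}(L_{\ki},L_{\di})$, and in particular the appearance of $f=\lcm(\kk,\dd)$ rather than, say, $\kk$ alone. Your sketch asserts that ``the saturation controlling the index is governed by $\max(v_p(\kk),v_p(\dd))$'' and that the quotient has order $(F/f)^2$ ``with the correct module structure,'' but these are restatements of what must be proved, not arguments. In the paper this step is carried by an explicit computation with the chosen resolution together with the lattice identity of Kani (\cite[Lem.\ 15]{kaniCM}), which compares $\Hom_{\O_F}(\tL_{\ki},L_{\di})$ with $\Hom_{\O_F}(L_{\ki},L_{\di})$ and is precisely where the interaction between the two conductors $\kk$ and $\dd$ enters. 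Your local\--at\--$p$, case\--by\--splitting\--type computation would presumably reproduce this, but until it is written out the proof is incomplete: a reader cannot check from your text that the answer is $\tfrac{F}{f}M$ rather than $\tfrac{F}{\kk}M$ or some other lattice of the right index.
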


\begin{proof}
Choose an algebraic integer $\omega$ with $\O_{K}=\Z+\omega\Z$; then $\O_F=\Z+F\omega\Z$. 
Tensoring with $\O_{\kk}$ induces a surjection $\Pic(\O_F)\to\Pic(\O_{\kk})$, so we can find a sublattice $\tL_{\ki}\subset L_{\ki}$ with ring of multipliers $\O_F$ such that  $\O_{\kk}\cdot  \tL_{\ki}=L_{\ki}$. Consider the following resolution of $L_{\ki}$ by projective $\O_F$-modules:

\[0\longleftarrow L_{\ki}\xleftarrow{\hspace{2mm}\varphi_0}\tL_{\ki}^2\xleftarrow{\hspace{2mm}\varphi_{\ki}}\tL_{\ki}^2\xleftarrow{\hspace{2mm}\varphi_{\di}}\ldots,\]
with 
\[\varphi_0=[1,-k\omega]  \hspace{5mm} \mbox{and }\hspace{3mm }\varphi_{\ki}=\varphi_{\di}=\ldots=\ltwomat F\omega&-\kk F\omega^2\\\frac{F}{\kk}&-F\omega\rtwomat .\]
We apply the functor $\hom(-,L_{\di})$ and examine the first coordinate to obtain
\begin{align*}
\Ext^1_{\O_F}(L_{\ki},L_{\di})&=\frac{\bigg\{\alpha\in\hom_{\O_F}(\tL_{\ki},L_{\di}):\O_{\kk}\alpha\subset\hom_f(\tL_{\ki},L_{\di}) \bigg\}}{\bigg\{\frac{F}{\kk}\O_{\kk}\cdot\hom_{\O_F}(\tL_{\ki},L_{\di})\bigg\}}\\
&=\frac{\hom_{\O_F}(L_{\ki},L_{\di})}{\frac{F}{f}\hom_{\O_F}(L_{\ki},L_{\di})},
\end{align*}
where the second line follows from \cite[Lem.\ 15]{kaniCM}.
\end{proof}

\begin{remark}
This lemma holds if $K$ is a {\it real} quadratic field as well. 
\end{remark}

\begin{corollary}
\label{lemar}
The map $\gamma_{i}$ of \eqref{gammamap} takes $\Ext^1_{\O_F}(L_{\ki},L_{\di})$ isomorphically onto the $F/f$-torsion in $\Ext^1(L_{\ki},L_{\di})$.
\end{corollary}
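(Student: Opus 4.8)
The plan is to obtain the corollary from Lemma \ref{lem} by a cardinality comparison, once the relevant orders are computed; here $\gamma_i$ denotes $\gamma_1$, the map of \eqref{gammamap} whose target is $\Ext^1(L_{\ki},L_{\di})$. First I would restate Lemma \ref{lem} as $\Ext^1_{\O_F}(L_{\ki},L_{\di}) \cong H/\frac{F}{f}H$, where $H := \hom_{\O_F}(L_{\ki},L_{\di}) = \{\alpha \in K : \alpha L_{\ki} \subseteq L_{\di}\}$. Since $H$ is a nonzero colon ideal of an order in the imaginary quadratic field $K$, it spans $K$ over $\Q$ and is therefore a free $\Z$-module of rank $2$. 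Hence $\Ext^1_{\O_F}(L_{\ki},L_{\di}) \cong (\Z/(F/f)\Z)^2$: in particular it is a finite group of order $(F/f)^2$, annihilated by the integer $F/f$.

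Next I would identify the $F/f$-torsion of the target. Since $\gamma_2$ is an isomorphism and $\gamma_3$ is injective with image the torsion subgroup (Proposition \ref{propmat}), and since $\Ext^1_{\an}(\C/L_{\ki},\C/L_{\di}) \cong \tilde E = \C/L_{\ki}L_{\di}$ is an elliptic curve, whose torsion subgroup is $(\Q/\Z)^2$, it follows that $\Ext^1(L_{\ki},L_{\di}) \cong (\Q/\Z)^2$. Its $F/f$-torsion subgroup is thus isomorphic to $(\Z/(F/f)\Z)^2$, again of order $(F/f)^2$.

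The proof then closes formally. A homomorphism of abelian groups is $\Z$-linear, so $\gamma_1$ sends elements annihilated by the integer $F/f$ to elements annihilated by $F/f$; since its source $\Ext^1_{\O_F}(L_{\ki},L_{\di})$ is annihilated by $F/f$, the image of $\gamma_1$ lies in the $F/f$-torsion of $\Ext^1(L_{\ki},L_{\di})$. As $\gamma_1$ is injective and its source and that torsion subgroup are finite of the same order $(F/f)^2$, the map $\gamma_1$ takes $\Ext^1_{\O_F}(L_{\ki},L_{\di})$ isomorphically onto the $F/f$-torsion. The identical argument — using also that $\gamma_2$ is an isomorphism and $\gamma_3$ is injective — shows that $\gamma_2\gamma_1$ and $\gamma_3\gamma_2\gamma_1$ take $\Ext^1_{\O_F}(L_{\ki},L_{\di})$ isomorphically onto the $F/f$-torsion in $\Ext^1_{alg}(\C/L_{\ki},\C/L_{\di})$ and in $\Ext^1_{\an}(\C/L_{\ki},\C/L_{\di})$ respectively.

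I do not expect a real obstacle: the core is just a count, and the only steps needing a little care are the two order computations — that $\hom_{\O_F}(L_{\ki},L_{\di})$ is a rank-$2$ free $\Z$-module (a routine fact about colon ideals in $K$) and that the torsion subgroup of the elliptic curve $\tilde E$ is $(\Q/\Z)^2$ (standard). As an alternative to the final count, one could argue conceptually: $\Ext^1(L_{\ki},L_{\di}) = \varinjlim_F \Ext^1_{\O_F}(L_{\ki},L_{\di})$, since any extension of $K$-lattices becomes an extension of $\O_F$-modules once $F$ is divisible by the conductor of the middle lattice, and under Lemma \ref{lem} the transition maps in this colimit are exactly the inclusions of successive torsion subgroups; but the direct cardinality argument above is shorter and self-contained.
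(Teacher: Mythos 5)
Your proposal is correct and follows essentially the same route as the paper: use Lemma \ref{lem} to see that the source is killed by $F/f$, use Proposition \ref{propmat} together with the fact that $\gamma_2$ is an isomorphism to compute that the $F/f$-torsion in $\Ext^1(L_{\ki},L_{\di})$ has cardinality $(F/f)^2$, and conclude from the injectivity of $\gamma_1$ and a count. You are slightly more explicit than the paper in verifying that the source itself has cardinality $(F/f)^2$ (via the rank-$2$ free $\Z$-module structure of $\hom_{\O_F}(L_{\ki},L_{\di})$), a step the paper leaves implicit but which is needed for the final cardinality comparison.
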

\begin{proof}
Lemma \ref{lem} shows that $\Ext^1_{\O_F}(L_{\ki},L_{\di})$ is an $F/f$-torsion group, so $\gamma_1$ maps into the $F/f$-torsion in $\Ext^1(L_{\ki},L_{\di})$. Proposition \ref{propmat} implies that the $F/f$-torsion in $\Ext^1_{alg}(\C/L_{\ki}, \C/L_2)$ has cardinality $(F/f)^2$, hence the $F/f$-torsion in $\Ext^1(L_{\ki},L_{\di})$ also has cardinality $(F/f)^2$ because $\gamma_2$ is an isomorphism. Finally, we know $\gamma_{1}$ is injective, and since its source and target have the same finite cardinality, $\gamma_1$ must be an isomorphism.
\end{proof}

\begin{lemma}
\label{lemorder}
Suppose
\[
0\to L_{\di}\to L\to L_{\ki}\to 0
\]
is an extension of $K$-lattices, where $L_i$ has conductor $f_i$ (for $i=1,2$). If the corresponding element of $\Ext^1(L_{\ki},L_{\di})$ has order $n$, then $L$ has conductor $n\cdot \lcm(\kk,\dd)$.
\end{lemma}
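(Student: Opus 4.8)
The plan is to show that $f := \lcm(f_1,f_2)$ divides $\cond(L)$, that $\cond(L)$ divides $nf$, and then to pin down $\cond(L)$ exactly by using Corollary \ref{lemar} in both directions. Write $\xi \in \Ext^1(L_1,L_2)$ for the class of the given extension (so $\mathrm{ord}(\xi) = n$), and realize the sequence inside $K^2$, with $\pi\colon K^2 \to K$ the $K$-linear projection whose kernel is $L_2$.

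First I would check $f \mid \cond(L)$. If $\alpha \in R(L)$ and $x \in L_2$, then $\pi(\alpha x) = \alpha\,\pi(x) = 0$ while $\alpha x \in L$, so $\alpha L_2 \subseteq L_2$; hence $\alpha$ also induces a multiplier of $L/L_2 \cong L_1$. As $R$ is an invariant of $K$-linear isomorphism, this yields $R(L) \subseteq R(L_1)\cap R(L_2) = \O_{f_1}\cap\O_{f_2} = \O_f$, so $f \mid \cond(L)$; write $\cond(L) = mf$.

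Next come the two divisibility bounds. For $n \mid m$: since $R(L) = \O_{mf} \subseteq \O_{f_i}$, all three terms of the sequence are $\O_{mf}$-modules and the maps are $\O_{mf}$-linear, so the extension defines a class in $\Ext^1_{\O_{mf}}(L_1,L_2)$ mapping to $\xi$ under $\gamma_1$; by Corollary \ref{lemar} the image of $\gamma_1$ is the $(mf/f)=m$-torsion subgroup, so $m\xi = 0$ and $n \mid m$. For $m \mid n$: $\xi$ is $n$-torsion, so by Corollary \ref{lemar} with $F = nf$ it is $\gamma_1(\eta)$ for some $\eta \in \Ext^1_{\O_{nf}}(L_1,L_2)$; a representing sequence of $\O_{nf}$-modules $0 \to L_2 \to M \to L_1 \to 0$ has $\O_{nf} \subseteq R(M)$, hence $\cond(M) \mid nf$, and since $M$ and $L$ represent the same class they are equivalent extensions and hence isomorphic as $K$-lattices, so $\cond(L) = \cond(M) \mid nf$, i.e.\ $m \mid n$. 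Therefore $m = n$ and $\cond(L) = nf = n\cdot\lcm(f_1,f_2)$.

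The one point that needs care is this double application of Corollary \ref{lemar} — passing between ``$L$ is a module over $\O_{mf}$'' and ``$\xi$ is $m$-torsion'' in both directions — together with the remark that equivalent extensions of $K$-lattices have isomorphic middle terms, so that $\cond(L)$ depends only on $\xi$. Beyond this bookkeeping I expect no real obstacle.
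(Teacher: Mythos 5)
Your proposal is correct and follows essentially the same route as the paper: the paper's (very terse) proof observes that $\cond(L)$ is the minimal $F$ for which the class of $L$ lies in the image of $\Ext^1_{\O_F}(L_{\ki},L_{\di})$ and then invokes Corollary \ref{lemar}, which is exactly the two-directional argument you spell out. Your version merely supplies the details the paper leaves implicit (that $f\mid\cond(L)$, and that equivalent extensions have isomorphic middle terms), so there is nothing to correct.
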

\begin{proof}
The conductor of $L$ is the minimal $F$ such that the class in $\Ext^1(L_{\ki},L_{\di})$ representing $L$ is in the image of $\Ext^1_{\O_F}(L_{\ki},L_{\di})$. Corollary \ref{lemar} implies that this value is $n\cdot \lcm(\kk,\dd)$.
\end{proof}

\begin{proof}[Proof of Theorem $\ref{order}$]
The abelian surface $S$ is given as $\C^2/L$, where $L$ is the $\Z$-span of the period matrix in Proposition \ref{secext}, with $z \in \C$ any lift of the order $n$ torsion point $P \in \tilde E = \C/L_{\ki}L_{\di}$.  On the other hand, by Proposition \ref{propcan}, there is an integer $N\geq 1$ and a lattice $\a$ with $\O_N\subset R(\a)$, such that $L \simeq \O_N \oplus \a$.  The conductor of $\O_N\oplus \a$ is $N$, so Lemma \ref{lemorder} implies $N=n f$.

We can recover $\a$ as the quotient of the group 
\[\bigwedge_{\O_N}\nolimits^{2} \bigg(\O_N \oplus \a \bigg) \cong \bigwedge_{\O_N}\nolimits^{2} \a \,  \oplus \, \bigwedge_{\O_N}\nolimits^{2}\O_N \, \oplus \big(\O_N \otimes_{\O_N}\a\big) \cong \left(\bigwedge_{\O_N}\nolimits^{2} \a\right) \oplus \a\]
modulo it torsion.
On the other hand, the torsion-free part of $\bigwedge^2_{\O_n}L$ is spanned by the $2\times2$-minors of the period matrix.  Thus, $\a$ is the lattice generated by $L_{\ki}L_{\di}$ and the element $z$, i.e., the lattice corresponding to the elliptic curve $\tilde E/\langle P\rangle$.  
\end{proof}

\section{Proof of Theorem \ref{main}}
Let $A = E \times E'$ be a product of two elliptic curves with CM by the same imaginary quadratic field $K$, and suppose the conductor of $A$ is $f\geq 1$.  Then we may think of $\Aut(A)$ as a subgroup of $\GL_2(K)$.  The latter acts on $\P^1(K)$ by fractional linear transformation.      

\begin{lemma}\label{bij}
The orbits on $\P^1(K)$ under the action of $\Aut(A)\subset GL_2(K)$ are in bijection with the $\Aut(A)$-orbits of elliptic curves contained in $A$.  
\end{lemma}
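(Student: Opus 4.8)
The plan is to construct an explicit $\Aut(A)$-equivariant bijection between $\P^1(K)$ and the set of elliptic curves contained in $A$, and then pass to $\Aut(A)$-orbits. Write $A = \C^2/\Lambda$ with $E = \C/\a$, $E' = \C/\a'$ and $\Lambda = \a \oplus \a' \subset K \oplus K = K^2$, the inclusion $K^2 \subset \C^2$ being induced by the fixed embedding $K \hookrightarrow \C$; thus $\C^2 = K^2 \otimes_K \C$ as complex vector spaces, compatibly with $\Lambda \subset K^2$. As explained just before the lemma, $\Aut(A)$ is then the group $\{\gamma \in \GL_2(K) : \gamma\Lambda = \Lambda\}$, acting on $\C^2$ through $\GL_2(\C)$; this action preserves $K^2$, and the induced action on $\P^1(K)$ (the set of $K$-lines in $K^2$) is the fractional-linear one. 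Given a $K$-line $U \in \P^1(K)$, set $V_U := U \otimes_\Q \R \subset K^2 \otimes_\Q \R = \C^2$ — a complex line, since $U$ is a $K$-module — and $A_U := V_U/(\Lambda \cap V_U)$.

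I would then check four points. (i) \emph{Well-defined:} as $U$ is a $\Q$-subspace of $\Lambda \otimes_\Z \Q = K^2$, the lattice $\Lambda \cap U$ spans $U$ over $\Q$, hence spans $V_U$ over $\R$; so the discrete group $\Lambda \cap V_U$ is a full lattice in the line $V_U$, making $A_U$ a one-dimensional complex subtorus of $A$, hence (restrict a polarization) an elliptic curve in $A$. (ii) \emph{Injective:} an elliptic curve $B \subseteq A$ determines $\mathrm{Lie}(B) \subseteq \mathrm{Lie}(A) = \C^2$, which for $B = A_U$ equals $V_U$; and $V_U$ determines $U$ since $V_U \cap K^2 = U$ — if $w \in (V_U \cap K^2)\setminus U$, a generator $v$ of $U$ and $w$ would be $K$-linearly independent in $K^2$, hence $\C$-linearly independent in $\C^2$, contradicting $v,w \in V_U$. (iii) \emph{Surjective:} any elliptic curve $B \subseteq A$ has the form $V/(\Lambda \cap V)$ with $V = \mathrm{Lie}(B)$ a complex line and $\Lambda \cap V$ of rank $2$; since $\Lambda \cap V \neq 0$ and $\Lambda \subset K^2$, there is $0 \neq w \in V \cap K^2$, and then $U := Kw \in \P^1(K)$ has $V_U \subseteq V$, so $V_U = V$ (both complex lines) and $B = A_U$. (iv) \emph{Equivariant:} for $\gamma \in \Aut(A)$, $\gamma V_U = (\gamma U) \otimes_\Q \R = V_{\gamma U}$ and $\gamma(\Lambda \cap V_U) = \gamma\Lambda \cap \gamma V_U = \Lambda \cap V_{\gamma U}$, so $\gamma \cdot A_U = A_{\gamma U}$. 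Quotienting by $\Aut(A)$ then yields the bijection in the lemma.

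This is essentially formal once the identification $\C^2 = K^2 \otimes_K \C$ and the description $\Aut(A) = \{\gamma \in \GL_2(K):\gamma\Lambda = \Lambda\}$ are in place; the one substantive observation is the input to surjectivity, namely that the tangent line of any abelian subcurve of $A$ automatically meets $K^2$, simply because the subcurve's period lattice already sits inside $\Lambda \subset K^2$. The step most likely to need care is (iv) together with the conventions set before the lemma: one must pin down the identification $\End^0(A) \cong M_2(K)$ and the action on $\P^1(K)$ so that the geometric action of $\Aut(A)$ on subvarieties of $A$ matches the linear action on lines in $K^2$ exactly — rather than up to a twist such as $\gamma \mapsto {}^{t}\gamma^{-1}$. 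This is bookkeeping, not a genuine obstacle.
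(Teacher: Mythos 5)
Your proof is correct and takes essentially the same route as the paper, which simply notes that $E \mapsto T_0E$ is an $\Aut(A)$-equivariant bijection between elliptic curves in $A$ and the relevant one-dimensional subspaces of $T_0A$. You have merely filled in the details the paper leaves implicit --- in particular the identification of those subspaces with the $K$-rational lines $\P^1(K)$, via the observation that the period lattice of any subcurve already lies in $K^2$.
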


\begin{proof}
An elliptic curve $E\subset A$ determines a $1$-dimensional subspace $T_0 E\subset T_0 A$, and the map $E\mapsto T_0 E$ is an $\Aut(A)$-equivariant bijection between elliptic curves on $A$ and $1$-dimensional subspaces of $T_0A$.
\end{proof}

Thus, to prove Theorem \ref{main}, it suffices to count the number $N(E,E')$ of $\Aut(A)$-equivalence classes of elliptic curves on the abelian surface $A = E \times E'$.  By Proposition \ref{propcan}, we may assume $E = \C/\O_f$ and $E' = \C/\a$, where $\a$ has conductor $f'$ dividing $f$.  If $F \subset A$ is an elliptic curve contained in $A$, then both $F$ and the quotient $A/F$ are elliptic curves with CM by $K$, and one has a short exact sequence of abelian varieties:
\begin{equation}\label{ext}0 \to F \to A \to A/F \to 0.\end{equation}   
If $F' \subset A$ is another elliptic curve with corresponding sequence
\begin{equation}\label{ext'}0 \to f' \to A \to A/f' \to 0,\end{equation}   
then $F\subset A $ and $F' \subset A$ are $\Aut(A)$-equivalent if and only if \eqref{ext} and \eqref{ext'} are isomorphic as short exact sequences.

\begin{lemma}
If $F \subset A$ is an elliptic curve, then the conductor of $F$ divides $f$. 
\end{lemma}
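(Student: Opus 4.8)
The plan is to deduce this at once from Lemma \ref{lemorder} (equivalently Theorem \ref{order}), applied to the defining extension \eqref{ext}.

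First I would record that $F$ (and likewise $A/F$) has CM by $K$: composing the inclusion $F\hookrightarrow A=E\times E'$ with either projection yields a nonzero homomorphism of elliptic curves, hence an isogeny, from $F$ onto a curve with CM by $K$; therefore $\End(F)$ is an order in $K$. Thus the short exact sequence \eqref{ext} lives in the category of abelian varieties isogenous to products of elliptic curves with CM by $K$, and under the equivalence of categories with $K$-lattices invoked just before \eqref{gammamap} it corresponds to a short exact sequence of $K$-lattices
\[
0\to L_F\to L_A\to L_{A/F}\to 0,
\]
where $\C/L_F\cong F$, $\C^2/L_A\cong A$, $\C/L_{A/F}\cong A/F$, and the conductors of $L_F$, $L_A$, $L_{A/F}$ are $\cond(F)$, $f$, and $\cond(A/F)$ respectively.

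Next I would apply Lemma \ref{lemorder} with $L_{\di}=L_F$ and $L_{\ki}=L_{A/F}$: if $n$ denotes the order of the class of this extension in $\Ext^1(L_{A/F},L_F)$, the lemma gives $f=\cond(L_A)=n\cdot\lcm\bigl(\cond(F),\cond(A/F)\bigr)$. In particular $\lcm\bigl(\cond(F),\cond(A/F)\bigr)$ divides $f$, and since $\cond(F)$ divides this $\lcm$, we conclude $\cond(F)\mid f$, as claimed. (One could instead phrase this via Theorem \ref{order} directly, writing $A\cong S$ for the surface attached to a torsion point $P$ and reading off the conductors, but the appeal to Lemma \ref{lemorder} is cleaner.)

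I do not expect a genuine obstacle here, since the statement is essentially one invocation of Lemma \ref{lemorder}; the only point deserving a sentence of justification is that $F$ and $A/F$ really do have CM by $K$, so that the lattice machinery applies, and this is handled by the projection argument above. Note also that the ordering of $F$ versus $A/F$ in the conventions for $\Ext^1$ is irrelevant to the divisibility conclusion, as $\lcm$ is symmetric.
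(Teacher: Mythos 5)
Your proof is correct, but it takes a genuinely different route from the paper's. The paper argues directly on the subvariety: it writes $F$ as the image of a map $j=(j_1,j_2):E\to A$ from the factor $E=\C/\O_f$, invokes Kani's theorem that $\ker j_i=E[I_i]$ for ideals $I_i\subset\O_f$, and concludes $F\simeq E/E[I_1+I_2]$, which visibly carries an $\O_f$-action; the divisibility of the conductor is then immediate. You instead feed the defining extension $0\to F\to A\to A/F\to 0$ into the lattice-theoretic machinery of Section \ref{extensions}: under the equivalence with $K$-lattices, Lemma \ref{lemorder} gives $f=\cond(L_A)=n\cdot\lcm\bigl(\cond(F),\cond(A/F)\bigr)$, whence $\cond(F)\mid f$. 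This is not circular (Lemma \ref{lemorder} rests only on Proposition \ref{propmat} and Lemma \ref{lem}), and it has the virtue of reusing the paper's own apparatus and of handing you $\cond(A/F)\mid f$ simultaneously, which the paper obtains separately by dualizing in Proposition \ref{extred}. What the paper's argument buys in exchange is independence from the $\Ext$ computations and a slightly more concrete output, namely an explicit presentation $F\simeq E/E[I_1+I_2]$ exhibiting the $\O_f$-multiplication. Your one point needing care --- that $F$ and $A/F$ have CM by $K$ so the lattice equivalence applies --- is adequately handled by the projection argument (for $A/F$ one should say a word more, e.g.\ that the composite $E\times\{0\}\hookrightarrow A\to A/F$ or $\{0\}\times E'\hookrightarrow A\to A/F$ is nonzero for at least one factor, since $F$ cannot contain both), and the symmetry of $\lcm$ disposes of any ordering issue in $\Ext^1$.
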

\begin{proof}
Choose a map $j: E \to A$ so that $F = j(E)$.  Write $j = (j_{\ki}, j_{\di})$ for maps $j_{\ki} : E \to E$ and $j_{\di}: E \to \C/\a$.  Since $\O_f$ acts on both $E$ and $\C/\a$, we have $\ker j_{\ki} = E[I_{\ki}]$ and $\ker j_{\di} = E[I_{\di}]$ for certain ideals $I_{\ki}$ and $I_{\di}$ of $\O_f$, by a result of Kani \cite[Thm.\ 20(b)]{kaniCM}. Here, $E[I]$ is the subgroup of points in the kernel of $\alpha : E \to E$ for every $\alpha \in I$.  Thus, 
\[\ker j = \ker j_{\ki} \cap \ker j_{\di} = E[I_{\ki}] \cap E[I_{\di}] = E[I_{\ki} + I_{\di}].\]  It follows that $F \simeq E/\ker j \simeq E/E[I_{\ki} + I_{\di}]$, and so $F$ has multiplication by $\O_f$. 
\end{proof}

\begin{proposition}\label{extred}
$N(E,E')$ is equal to the number of isomorphism classes of short exact sequences \[0 \to E_{\ki} \to A \to E_{\di} \to 0,\]
 with $E_{\ki}$ and $E_{\di}$ elliptic curves of conductor dividing $f$.      
\end{proposition}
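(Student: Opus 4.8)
The plan is to match the set counted by $N(E,E')$ with the set of isomorphism classes of short exact sequences in the statement, in two steps: first reduce to subcurves of $A$, then pin down the conductors of the outer terms.

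By Lemma \ref{bij}, $N(E,E')$ is the number of $\Aut(A)$-orbits of elliptic curves $F \subset A$, and by the discussion preceding the Proposition, $F, F' \subset A$ are $\Aut(A)$-equivalent exactly when the sequences \eqref{ext} and \eqref{ext'} are isomorphic. (Briefly: an isomorphism of the two sequences has an automorphism $\phi$ of $A$ in its middle column, and commutativity of the left-hand square forces $\phi(F) = F'$; conversely any $\phi \in \Aut(A)$ with $\phi(F) = F'$ restricts to $F$ and descends to $A/F$, giving an isomorphism of sequences.) Thus $N(E,E')$ counts isomorphism classes of sequences $0 \to F \to A \to A/F \to 0$ coming from elliptic subcurves. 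Conversely, in any short exact sequence of abelian varieties $0 \to E_{\ki} \to A \to E_{\di} \to 0$ with $E_{\ki}, E_{\di}$ elliptic curves, the first map, having trivial kernel, is a closed immersion, so its image is an elliptic subcurve of $A$ and $E_{\di}$ is the corresponding quotient; hence such sequences are, up to isomorphism, exactly the ones above, and the count is unchanged. It then remains only to observe that $E_{\ki}$ and $E_{\di}$ in such a sequence automatically have conductor dividing $f$, so that imposing that condition changes nothing.

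For $E_{\ki}$ this is precisely the Lemma preceding the statement. For $E_{\di}$ I would pass to dual abelian varieties: dualizing the sequence yields $0 \to E_{\di}^\vee \to A^\vee \to E_{\ki}^\vee \to 0$, realizing $E_{\di}^\vee$ as an elliptic subcurve of $A^\vee$. Since $A^\vee \cong A$ (as $A$ is principally polarized) is again a singular abelian surface with CM by $K$ of conductor $f$, the same Lemma bounds the conductor of $E_{\di}^\vee$ by $f$; and since an elliptic curve is isomorphic to its dual, this bounds the conductor of $E_{\di}$.

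The only genuine content is the duality step in the last paragraph; everything else is bookkeeping around Lemma \ref{bij}, the preceding Lemma, and the standard fact that a short exact sequence of abelian varieties displays its left term as a subvariety and its right term as the quotient. I do not expect a real obstacle here — the one subtlety worth flagging is that $A/F$ need not be realizable as a subcurve of $A$ on the nose (only up to isogeny, which could change the conductor), which is exactly why the argument runs through $A^\vee$ rather than through $A$ directly.
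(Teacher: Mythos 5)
Your argument is correct and follows the paper's own proof essentially verbatim: the conductor bound for $E_{\ki}$ comes from the preceding lemma, and the bound for $E_{\di}$ comes from dualizing the sequence and using $A \cong \hat A$ to realize $E_{\di}^\vee \cong E_{\di}$ as an elliptic subcurve of $A$. The extra bookkeeping you supply (matching $\Aut(A)$-orbits of subcurves with isomorphism classes of sequences, and the remark about why one must pass through the dual rather than embed $A/F$ in $A$ directly) is sound and only makes explicit what the paper leaves implicit.
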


\begin{proof}
The previous lemma shows that any elliptic curve $E_{\ki} \subset A$ has conductor dividing $f$.  Dualizing, we see that $E_{\di} = A/E_{\ki}$ is also an elliptic curve on $\hat A = A$, so has conductor dividing $f$ as well.  
\end{proof}

We continue the proof of Theorem \ref{main}. Suppose that $E_{\ki}$ and $E_{\di}$ are elliptic curves with conductor dividing $f$.  Observe that if $\gamma_{\ki},\gamma_{\di}\in\Ext^1(E_{\di},E_{\ki})$ correspond to extensions
\begin{equation}
\label{e1}
0\to E_{\ki}\to A_{\ki}\to E_{\di}\to 0,
\end{equation}
\begin{equation}
\label{e2}
0\to E_{\ki}\to A_{\di}\to E_{\di}\to 0,
\end{equation}
then \eqref{e1} and \eqref{e2} are isomorphic as short exact sequences if and only if $\gamma_{\ki}$ and $\gamma_{\di}$ are in the same orbit of $\Aut(E_{\ki})\times \Aut(E_{\di})$ on $\Ext^1(E_{\di},E_{\ki})$. By assumption, $K$ contains no non-trivial roots of unity, so $\Aut(E_{\ki})=\Aut(E_{\di})=\{\pm 1\}$.
Combining this observation with Proposition \ref{extred}, we obtain
\[
N(E,E') =  \#\Aut(A)\backslash \P^1(K) = \sum_{\kk, \dd | f}\sum_{\substack{L_{\ki} \in \Pic(\O_{\kk})\\L_{\di} \in \Pic(\O_{\dd})}}  \# \Ext^1_{\O_f}(L_{\di},L_{\ki})_A/\{\pm 1\},
\]
where $\Ext^1_{\O_f}(L_{\di},L_{\ki})_A$ is the set of extensions classes $0 \to L_{\ki} \to L \to L_{\di} \to 0$ in $\Ext^1_{\O_f}(L_{\di},L_{\ki})$ such that $A \simeq \C^2/L$.  

Now fix integers $\kk$ and $\dd$ dividing $f$, and set $g=\gcd(\kk,\dd)$, $\ell=\lcm(\kk,\dd)$.  Recall that an isogeny of elliptic curves is \emph{cyclic} if its kernel is a cyclic group. We call a cyclic isogeny \emph{based} if the kernel is equipped with a distinguished generator. By Theorem \ref{order}, classes in $\Ext^1_{\O_f}(L_{\di},L_{\ki})_A$ correspond to based cyclic isogenies $\C/L_{\ki}L_{\di}\to \C/\a$ of degree $f/\ell$. Dualizing, we find that these are equinumerous to based cyclic $(f/\ell)$-isogenies $\C/\a \to \C/L_{\ki}L_{\di}$.

Recall that $r_K(a,b,N)$ is the number of cyclic subgroups of order $N$ of a fixed elliptic curve of conductor $a$ such that the quotient has conductor $b$. Thus, there are $r_K(f',g,f/\ell)$ cyclic subgroups $G\subset \C/\a$ of order $f/\ell$ such that the quotient has conductor $g$. For each such subgroup, there are $h_{\kk} h_{\dd}/h_g=h_\ell$ pairs $L_{\ki}\in\Pic(\O_{\kk})$, $L_{\di}\in\Pic(\O_{\dd})$ with $(\C/\a)/G\cong \C/L_{\ki}L_{\di}$. Given such $G$, $L_{\ki}$, and $L_{\di}$, there are $\pu(f/\ell)$ based cyclic $(f/\ell)$-isogenies $\C/\a\to \C/L_{\ki}L_{\di}$ with kernel $G$, up to the action of $\{\pm1\}$.  So we conclude that     
\begin{align*}
\#\Aut(A)\backslash \P^1(K) &= \sum_{\substack{k,d\,  | f\\ g = \gcd(k,d)\\ \ell = \lcm(k,d)}} h_\ell\cdot \pu\left(f/\ell\right) \cdot r_K\left(f',g,f/\ell\right)\\
&= \sum_{g \, | \, \ell\,  | f} 2^{\omega(\ell/g)} \cdot h_\ell\cdot \pu\left(f/\ell\right) \cdot r_K\left(f',g,f/\ell\right),\\
\end{align*}
completing the proof of Theorem \ref{main}.

\section{Computation of $r_K(a,b,c)$}\label{volcano}
\label{secr}
To make Theorem \ref{main} explicit, we need to compute the number $r_K(a,b,c)$ of cyclic subgroups $C$ of order $c$ of a fixed CM elliptic curve $E$ of conductor $a$ such that $E/C$ has conductor $b$.  We will see that this does not depend on the choice of the elliptic curve $E$ of conductor $a$.  First we reduce to the case where $a,b$, and $c$ are powers of a prime $p$:

\begin{lemma}\label{mult}
The function $r_K(a,b,c)$ is multiplicative: if we factor $a$, $b$, and $c$ into prime powers $a = \prod_p p^{a_p}$, $b = \prod_p p^{b_p}$, and $c = \prod p^{c_p}$, then 
\[r_K(a,b,c) = \prod_p r_K\left(p^{a_p}, p^{b_p}, p^{c_p}\right).\]
\end{lemma}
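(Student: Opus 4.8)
The plan is to reduce the count defining $r_K(a,b,c)$ to a product of local counts, one at each prime $p$, by decomposing cyclic subgroups into their $p$-primary parts and using that the conductor of a lattice is read off prime by prime. First I would rephrase the count in terms of lattices: fixing an elliptic curve $E=\C/\a$ of conductor $a$, a cyclic subgroup $C\subset E$ of order $c$ is the same datum as an overlattice $\a\subseteq\b\subseteq K$ with $\b/\a\cong\Z/c\Z$, and then $E/C\cong\C/\b$; so $r_K(a,b,c)=\#\{\b\supseteq\a:\b/\a\cong\Z/c\Z,\ \cond(\b)=b\}$.

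Next I would decompose over primes. Writing $c=\prod_p p^{c_p}$, the $p$-Sylow subgroups of the cyclic group $\b/\a$ produce intermediate lattices $\a\subseteq\b_{(p)}\subseteq\b$ with $\b_{(p)}/\a$ cyclic of order $p^{c_p}$, and one has $\b=\sum_p\b_{(p)}$ with $\b/\a=\bigoplus_p\b_{(p)}/\a$; conversely $\b$ is recovered as $\sum_p\b_{(p)}$ from any such family. This is a bijection between the overlattices $\b$ being counted and families $(\b_{(p)})_{p\mid c}$ of overlattices of $\a$ with cyclic quotient of order $p^{c_p}$. The conductor condition decouples along this bijection. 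Indeed, $v_p(\cond(L))$ for a lattice $L\subset K$ depends only on the $\O_K\otimes\Z_p$-module $L\otimes\Z_p$: writing $R(L)=\O_d$ with $d=\cond(L)$, one has $R(L)\otimes\Z_p=\Z_p+p^{v_p(d)}(\O_K\otimes\Z_p)$, and $R(L)\otimes\Z_p$ is just the ring of multipliers of $L\otimes\Z_p$ inside $K\otimes\Q_p$ (multiplier rings commute with localization for invertible modules). For $q\neq p$ the group $\b_{(q)}/\a$ is a $q$-group, so $\a\otimes\Z_p\to\b_{(q)}\otimes\Z_p$ is an isomorphism, whence $\b\otimes\Z_p=\b_{(p)}\otimes\Z_p$ and $v_p(\cond(\b))=v_p(\cond(\b_{(p)}))$. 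Therefore $\cond(\b)=\prod_p p^{b_p}$ is equivalent to the system of conditions $v_p(\cond(\b_{(p)}))=b_p$, one per $p$ (with $c_p=0$ forcing $\b_{(p)}=\a$, so $b_p=a_p$), and the count factors:
\[
r_K(a,b,c)=\prod_p\ \#\{\a\subseteq\b_{(p)}:\b_{(p)}/\a\cong\Z/p^{c_p}\Z,\ v_p(\cond(\b_{(p)}))=b_p\}.
\]

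To finish I would identify the $p$-th factor with $r_K(p^{a_p},p^{b_p},p^{c_p})$. That factor depends only on the isomorphism class of the $\O_K\otimes\Z_p$-module $\a\otimes\Z_p$ (primes $q\neq p$ impose no condition, since $\b_{(p)}\otimes\Z_q=\a\otimes\Z_q$ automatically gives $v_q(\cond(\b_{(p)}))=a_q$). Now $\a\otimes\Z_p$ is invertible over $\O_a\otimes\Z_p=\O_{p^{a_p}}\otimes\Z_p$, a ring with trivial Picard group --- it is local for $a_p\geq 1$, and a DVR or a product of two copies of $\Z_p$ for $a_p=0$ --- so $\a\otimes\Z_p\cong\O_{p^{a_p}}\otimes\Z_p$, depending only on $a_p$. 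Carrying out the same count for an elliptic curve $\C/\mathfrak d$ of conductor $p^{a_p}$, for which $\mathfrak d\otimes\Z_p$ is likewise $\cong\O_{p^{a_p}}\otimes\Z_p$, computes $r_K(p^{a_p},p^{b_p},p^{c_p})$, so the two counts coincide. (In particular $r_K(a,b,c)$ is independent of the chosen curve of conductor $a$, as asserted above.)

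The step I expect to be the main obstacle is this localization bookkeeping: confirming that $v_p(\cond(L))$ is genuinely a function of $L\otimes\Z_p$ alone, and that $\a\otimes\Z_p$ is pinned down up to isomorphism by $a_p$. Once those two local facts are secured, the splitting of a cyclic group into $p$-parts makes the multiplicativity essentially formal.
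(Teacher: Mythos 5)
Your argument is correct and follows essentially the same route as the paper's proof: decompose the cyclic subgroup into its $p$-primary parts and use localization at $p$ to see that a $p$-power isogeny changes the conductor only at $p$. The one place you go beyond the paper is in justifying the final identification of the local factor with $r_K(p^{a_p},p^{b_p},p^{c_p})$ via the triviality of the Picard group of $\O_a\otimes\Z_p$, a step the paper asserts without comment.
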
  

\begin{proof}
If $E$ is an elliptic curve, then giving a cyclic subgroup of order $c$ is the same as giving a cyclic subgroup of order $p^{c_p}$ for every prime $p$.  Furthermore, if $E \to E'$ is an isogeny of CM elliptic curves of $p$-power degree, the ratio of the conductors of $E$ and $E'$ is a power of $p$.  To see this, it suffices to show that if $\a \subset \b$ is an inclusion of rank 1 $K$-lattices of index $p^a$, then the conductors of $\a$ and $\b$ are off by a power of $p$.  And indeed, the $\ell$-part of the conductor can be computed locally at $\ell$, and $\a \hookrightarrow \b$ induces an isomorphism $\a \otimes \Z_\ell \simeq  \b \otimes \Z_\ell$, for all $\ell \neq p$.  It follows then that 
\[r_K(a,b,c) = \prod_p r_K(a,p^{b_p}\prod_{\ell \neq p} \ell^{a_\ell}, p^{c_p})\]
and 
\[ r_K(a, p^{b_p} \prod_{\ell \neq p} \ell^{a_\ell}, p^{c_p}) =  r_K(p^{a_p},p^{b_p},p^{c_p}),\]
which proves the lemma.     
\end{proof}

Next, we fix a prime $p$ and show how to compute the numbers $r_K(p^a,p^b,p^c)$.  We will use a variant of the $p$-isogeny volcano \cite{suther}, a tool typically used to study isogenies of elliptic curves over finite fields.  We define a graph $G_p$, called the {\it volcano}, whose structure depends only on $p$ and $\chi_K(p)$, i.e.\ whether $p$ is split, inert, or ramified in $K$.  We omit the dependance on $K$ in the notation.  The vertices of $G_p$ are partitioned into infinitely many {\it levels} $G_{p,k}$, one for each integer $k \geq 0$.  We call the subgraph on $G_{p,0}$ the {\it rim} of the volcano.  The graph $G_p$ is then uniquely determined by the following conditions:
\begin{itemize}
\item $G_p$ is $(p+1)$-regular, with no self-loops or multi-edges.  
\item The rim $G_{p,0}$ is the complete graph on $2 + \chi_K(p)$ vertices.
\item For $k \geq 1$, each vertex in $G_{p,k}$ has a unique neighbor in $G_{p, k-1}$, and this accounts for every edge not in the rim.      
\end{itemize}

\noindent
The figures below depict the top two levels of the graph $G_3$ for each of the three splitting types. 
\begin{figure}[htp]
\begin{tikzpicture}

\draw (-1.4,-0.29) -- (-3.30,-1.5);
\draw[fill=red] (-3.30,-1.5) circle (0.06);
\draw (-1.4,-0.29) -- (-2.0,-1.5);
\draw[fill=red] (-2.0,-1.5) circle (0.06);
\draw (-1.4,-0.29) -- (-.70,-1.5);
\draw[fill=red] (-.70,-1.5) circle (0.06);
\draw (-1.4,-0.29) -- (.60,-1.5);
\draw[fill=red] (.60,-1.5) circle (0.06);
\draw[fill=red] (-1.4,-0.29) circle (0.06);

\draw (-3.30,-1.5) -- (-3.9,-3.0);
\draw[fill=red] (-3.90,-3.0) circle (0.06);
\draw (-3.90,-3.0) -- (-4.0,-3.7);
\draw (-3.90,-3.0) -- (-3.9,-3.7);
\draw (-3.90,-3.0) -- (-3.8,-3.7);

\draw (-3.30,-1.5) -- (-3.45,-3.0);
\draw[fill=red] (-3.45,-3.0) circle (0.06);
\draw (-3.45,-3.0) -- (-3.55,-3.7);
\draw (-3.45,-3.0) -- (-3.45,-3.7);
\draw (-3.45,-3.0) -- (-3.35,-3.7);

\draw (-3.30,-1.5) -- (-2.9,-3.0);
\draw[fill=red] (-2.9,-3.0) circle (0.06);
\draw (-2.9,-3.0) -- (-3.0,-3.7);
\draw (-2.9,-3.0) -- (-2.9,-3.7);
\draw (-2.9,-3.0) -- (-2.8,-3.7);

\draw (-2.0,-1.5) -- (-2.6,-3.0);
\draw[fill=red] (-2.60,-3.0) circle (0.06);
\draw (-2.6,-3.0) -- (-2.7,-3.7);
\draw (-2.6,-3.0) -- (-2.6,-3.7);
\draw (-2.6,-3.0) -- (-2.5,-3.7);

\draw (-2.0,-1.5) -- (-2.1,-3.0);
\draw[fill=red] (-2.1,-3.0) circle (0.06);
\draw (-2.1,-3.0) -- (-2.2,-3.7);
\draw (-2.1,-3.0) -- (-2.1,-3.7);
\draw (-2.1,-3.0) -- (-2.0,-3.7);

\draw (-2.0,-1.5) -- (-1.65,-3.0);
\draw[fill=red] (-1.65,-3.0) circle (0.06);
\draw (-1.65,-3.0) -- (-1.75,-3.7);
\draw (-1.65,-3.0) -- (-1.65,-3.7);
\draw (-1.65,-3.0) -- (-1.55,-3.7);

\draw (-.7,-1.5) -- (-1.2,-3.0);
\draw[fill=red] (-1.20,-3.0) circle (0.06);
\draw (-1.2,-3.0) -- (-1.3,-3.7);
\draw (-1.2,-3.0) -- (-1.2,-3.7);
\draw (-1.2,-3.0) -- (-1.1,-3.7);

\draw (-.7,-1.5) -- (-.7,-3.0);
\draw[fill=red] (-.7,-3.0) circle (0.06);
\draw (-.7,-3.0) -- (-.8,-3.7);
\draw (-.7,-3.0) -- (-.7,-3.7);
\draw (-.7,-3.0) -- (-.6,-3.7);

\draw (-.7,-1.5) -- (-.2,-3.0);
\draw[fill=red] (-.2,-3.0) circle (0.06);
\draw (-.2,-3.0) -- (-.3,-3.7);
\draw (-.2,-3.0) -- (-.2,-3.7);
\draw (-.2,-3.0) -- (-.1,-3.7);

\draw (.6,-1.5) -- (.2,-3.0);
\draw[fill=red] (.2,-3.0) circle (0.06);
\draw (.2,-3.0) -- (.1,-3.7);
\draw (.2,-3.0) -- (.2,-3.7);
\draw (.2,-3.0) -- (.3,-3.7);

\draw (.6,-1.5) -- (.65,-3.0);
\draw[fill=red] (.65,-3.0) circle (0.06);
\draw (.65,-3.0) -- (.55,-3.7);
\draw (.65,-3.0) -- (.65,-3.7);
\draw (.65,-3.0) -- (.75,-3.7);

\draw (.6,-1.5) -- (1.3,-3.0);
\draw[fill=red] (1.3,-3.0) circle (0.06);
\draw (1.3,-3.0) -- (1.2,-3.7);
\draw (1.3,-3.0) -- (1.3,-3.7);
\draw (1.3,-3.0) -- (1.4,-3.7);

\end{tikzpicture}
\caption{$G_3$, inert case.}\label{figure:inert}
\end{figure}
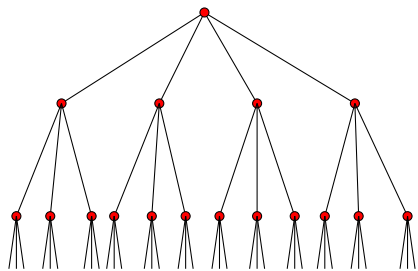
\vspace{-8pt}

\hspace{4mm} \newline

\begin{figure}[htp]
\begin{tikzpicture}

\draw (-2.0,-0.29) -- (-3.30,-1.5);
\draw[fill=red] (-3.30,-1.5) circle (0.06);
\draw (-2.0,-0.29) -- (-2.0,-1.5);
\draw[fill=red] (-2.0,-1.5) circle (0.06);
\draw (-2.0,-0.29) -- (-.70,-1.5);
\draw[fill=red] (-.70,-1.5) circle (0.06);
\draw[fill=red] (-2.0,-0.29) circle (0.06);

\draw (-3.30,-1.5) -- (-3.9,-3.0);
\draw[fill=red] (-3.90,-3.0) circle (0.06);
\draw (-3.90,-3.0) -- (-4.0,-3.7);
\draw (-3.90,-3.0) -- (-3.9,-3.7);
\draw (-3.90,-3.0) -- (-3.8,-3.7);

\draw (-3.30,-1.5) -- (-3.45,-3.0);
\draw[fill=red] (-3.45,-3.0) circle (0.06);
\draw (-3.45,-3.0) -- (-3.55,-3.7);
\draw (-3.45,-3.0) -- (-3.45,-3.7);
\draw (-3.45,-3.0) -- (-3.35,-3.7);

\draw (-3.30,-1.5) -- (-2.9,-3.0);
\draw[fill=red] (-2.9,-3.0) circle (0.06);
\draw (-2.9,-3.0) -- (-3.0,-3.7);
\draw (-2.9,-3.0) -- (-2.9,-3.7);
\draw (-2.9,-3.0) -- (-2.8,-3.7);

\draw (-2.0,-1.5) -- (-2.6,-3.0);
\draw[fill=red] (-2.60,-3.0) circle (0.06);
\draw (-2.6,-3.0) -- (-2.7,-3.7);
\draw (-2.6,-3.0) -- (-2.6,-3.7);
\draw (-2.6,-3.0) -- (-2.5,-3.7);

\draw (-2.0,-1.5) -- (-2.1,-3.0);
\draw[fill=red] (-2.1,-3.0) circle (0.06);
\draw (-2.1,-3.0) -- (-2.2,-3.7);
\draw (-2.1,-3.0) -- (-2.1,-3.7);
\draw (-2.1,-3.0) -- (-2.0,-3.7);

\draw (-2.0,-1.5) -- (-1.65,-3.0);
\draw[fill=red] (-1.65,-3.0) circle (0.06);
\draw (-1.65,-3.0) -- (-1.75,-3.7);
\draw (-1.65,-3.0) -- (-1.65,-3.7);
\draw (-1.65,-3.0) -- (-1.55,-3.7);

\draw (-.7,-1.5) -- (-1.2,-3.0);
\draw[fill=red] (-1.20,-3.0) circle (0.06);
\draw (-1.2,-3.0) -- (-1.3,-3.7);
\draw (-1.2,-3.0) -- (-1.2,-3.7);
\draw (-1.2,-3.0) -- (-1.1,-3.7);

\draw (-.7,-1.5) -- (-.7,-3.0);
\draw[fill=red] (-.7,-3.0) circle (0.06);
\draw (-.7,-3.0) -- (-.8,-3.7);
\draw (-.7,-3.0) -- (-.7,-3.7);
\draw (-.7,-3.0) -- (-.6,-3.7);

\draw (-.7,-1.5) -- (-.2,-3.0);
\draw[fill=red] (-.2,-3.0) circle (0.06);
\draw (-.2,-3.0) -- (-.3,-3.7);
\draw (-.2,-3.0) -- (-.2,-3.7);
\draw (-.2,-3.0) -- (-.1,-3.7);

\def \x {4.2}

\draw (\x-2.0,-0.29) -- (\x-3.30,-1.5);
\draw[fill=red] (\x-3.30,-1.5) circle (0.06);
\draw (\x-2.0,-0.29) -- (\x-2.0,-1.5);
\draw[fill=red] (\x-2.0,-1.5) circle (0.06);
\draw (\x-2.0,-0.29) -- (\x-.70,-1.5);
\draw[fill=red] (\x-.70,-1.5) circle (0.06);
\draw[fill=red] (\x-2.0,-0.29) circle (0.06);
\draw (\x-2.0,-0.29) -- (-2.0,-0.29);

\draw (\x-3.30,-1.5) -- (\x-3.9,-3.0);
\draw[fill=red] (\x-3.90,-3.0) circle (0.06);
\draw (\x-3.90,-3.0) -- (\x-4.0,-3.7);
\draw (\x-3.90,-3.0) -- (\x-3.9,-3.7);
\draw (\x-3.90,-3.0) -- (\x-3.8,-3.7);

\draw (\x-3.30,-1.5) -- (\x-3.45,-3.0);
\draw[fill=red] (\x-3.45,-3.0) circle (0.06);
\draw (\x-3.45,-3.0) -- (\x-3.55,-3.7);
\draw (\x-3.45,-3.0) -- (\x-3.45,-3.7);
\draw (\x-3.45,-3.0) -- (\x-3.35,-3.7);

\draw (\x-3.30,-1.5) -- (\x-2.9,-3.0);
\draw[fill=red] (\x-2.9,-3.0) circle (0.06);
\draw (\x-2.9,-3.0) -- (\x-3.0,-3.7);
\draw (\x-2.9,-3.0) -- (\x-2.9,-3.7);
\draw (\x-2.9,-3.0) -- (\x-2.8,-3.7);

\draw (\x-2.0,-1.5) -- (\x-2.6,-3.0);
\draw[fill=red] (\x-2.60,-3.0) circle (0.06);
\draw (\x-2.6,-3.0) -- (\x-2.7,-3.7);
\draw (\x-2.6,-3.0) -- (\x-2.6,-3.7);
\draw (\x-2.6,-3.0) -- (\x-2.5,-3.7);

\draw (\x-2.0,-1.5) -- (\x-2.1,-3.0);
\draw[fill=red] (\x-2.1,-3.0) circle (0.06);
\draw (\x-2.1,-3.0) -- (\x-2.2,-3.7);
\draw (\x-2.1,-3.0) -- (\x-2.1,-3.7);
\draw (\x-2.1,-3.0) -- (\x-2.0,-3.7);

\draw (\x-2.0,-1.5) -- (\x-1.65,-3.0);
\draw[fill=red] (\x-1.65,-3.0) circle (0.06);
\draw (\x-1.65,-3.0) -- (\x-1.75,-3.7);
\draw (\x-1.65,-3.0) -- (\x-1.65,-3.7);
\draw (\x-1.65,-3.0) -- (\x-1.55,-3.7);

\draw (\x-.7,-1.5) -- (\x-1.2,-3.0);
\draw[fill=red] (\x-1.20,-3.0) circle (0.06);
\draw (\x-1.2,-3.0) -- (\x-1.3,-3.7);
\draw (\x-1.2,-3.0) -- (\x-1.2,-3.7);
\draw (\x-1.2,-3.0) -- (\x-1.1,-3.7);

\draw (\x-.7,-1.5) -- (\x-.7,-3.0);
\draw[fill=red] (\x-.7,-3.0) circle (0.06);
\draw (\x-.7,-3.0) -- (\x-.8,-3.7);
\draw (\x-.7,-3.0) -- (\x-.7,-3.7);
\draw (\x-.7,-3.0) -- (\x-.6,-3.7);

\draw (\x-.7,-1.5) -- (\x-.2,-3.0);
\draw[fill=red] (\x-.2,-3.0) circle (0.06);
\draw (\x-.2,-3.0) -- (\x-.3,-3.7);
\draw (\x-.2,-3.0) -- (\x-.2,-3.7);
\draw (\x-.2,-3.0) -- (\x-.1,-3.7);

%
%

\end{tikzpicture}
\caption{$G_3$, ramified case.}\label{ramified}
\end{figure}
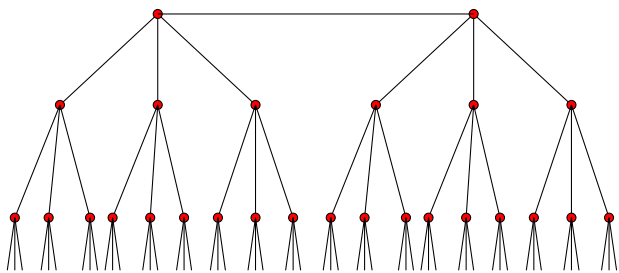
\vspace{-8pt}

\hspace{4mm} \newline

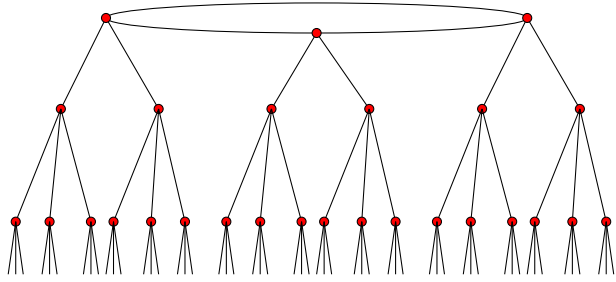
\begin{figure}[htp]
\begin{tikzpicture}

\draw (0.1,-.29) ellipse (2.8 and 0.2);

\draw (-2.7,-0.29) -- (-3.30,-1.5);
\draw[fill=red] (-3.30,-1.5) circle (0.06);
\draw (-2.7,-0.29) -- (-2.0,-1.5);
\draw[fill=red] (-2.0,-1.5) circle (0.06);
\draw[fill=red] (-2.7,-0.29) circle (0.06);

\draw (-3.30,-1.5) -- (-3.9,-3.0);
\draw[fill=red] (-3.90,-3.0) circle (0.06);
\draw (-3.90,-3.0) -- (-4.0,-3.7);
\draw (-3.90,-3.0) -- (-3.9,-3.7);
\draw (-3.90,-3.0) -- (-3.8,-3.7);

\draw (-3.30,-1.5) -- (-3.45,-3.0);
\draw[fill=red] (-3.45,-3.0) circle (0.06);
\draw (-3.45,-3.0) -- (-3.55,-3.7);
\draw (-3.45,-3.0) -- (-3.45,-3.7);
\draw (-3.45,-3.0) -- (-3.35,-3.7);

\draw (-3.30,-1.5) -- (-2.9,-3.0);
\draw[fill=red] (-2.9,-3.0) circle (0.06);
\draw (-2.9,-3.0) -- (-3.0,-3.7);
\draw (-2.9,-3.0) -- (-2.9,-3.7);
\draw (-2.9,-3.0) -- (-2.8,-3.7);

\draw (-2.0,-1.5) -- (-2.6,-3.0);
\draw[fill=red] (-2.60,-3.0) circle (0.06);
\draw (-2.6,-3.0) -- (-2.7,-3.7);
\draw (-2.6,-3.0) -- (-2.6,-3.7);
\draw (-2.6,-3.0) -- (-2.5,-3.7);

\draw (-2.0,-1.5) -- (-2.1,-3.0);
\draw[fill=red] (-2.1,-3.0) circle (0.06);
\draw (-2.1,-3.0) -- (-2.2,-3.7);
\draw (-2.1,-3.0) -- (-2.1,-3.7);
\draw (-2.1,-3.0) -- (-2.0,-3.7);

\draw (-2.0,-1.5) -- (-1.65,-3.0);
\draw[fill=red] (-1.65,-3.0) circle (0.06);
\draw (-1.65,-3.0) -- (-1.75,-3.7);
\draw (-1.65,-3.0) -- (-1.65,-3.7);
\draw (-1.65,-3.0) -- (-1.55,-3.7);

%
%

\def \x {2.8}

\draw (\x-2.7,-0.49) -- (\x-3.30,-1.5);
\draw[fill=red] (\x-3.30,-1.5) circle (0.06);
\draw (\x-2.7,-0.49) -- (\x-2.0,-1.5);
\draw[fill=red] (\x-2.0,-1.5) circle (0.06);
\draw[fill=red] (\x-2.7,-0.49) circle (0.06);

\draw (\x-3.30,-1.5) -- (\x-3.9,-3.0);
\draw[fill=red] (\x-3.90,-3.0) circle (0.06);
\draw (\x-3.90,-3.0) -- (\x-4.0,-3.7);
\draw (\x-3.90,-3.0) -- (\x-3.9,-3.7);
\draw (\x-3.90,-3.0) -- (\x-3.8,-3.7);

\draw (\x-3.30,-1.5) -- (\x-3.45,-3.0);
\draw[fill=red] (\x-3.45,-3.0) circle (0.06);
\draw (\x-3.45,-3.0) -- (\x-3.55,-3.7);
\draw (\x-3.45,-3.0) -- (\x-3.45,-3.7);
\draw (\x-3.45,-3.0) -- (\x-3.35,-3.7);

\draw (\x-3.30,-1.5) -- (\x-2.9,-3.0);
\draw[fill=red] (\x-2.9,-3.0) circle (0.06);
\draw (\x-2.9,-3.0) -- (\x-3.0,-3.7);
\draw (\x-2.9,-3.0) -- (\x-2.9,-3.7);
\draw (\x-2.9,-3.0) -- (\x-2.8,-3.7);

\draw (\x-2.0,-1.5) -- (\x-2.6,-3.0);
\draw[fill=red] (\x-2.60,-3.0) circle (0.06);
\draw (\x-2.6,-3.0) -- (\x-2.7,-3.7);
\draw (\x-2.6,-3.0) -- (\x-2.6,-3.7);
\draw (\x-2.6,-3.0) -- (\x-2.5,-3.7);

\draw (\x-2.0,-1.5) -- (\x-2.1,-3.0);
\draw[fill=red] (\x-2.1,-3.0) circle (0.06);
\draw (\x-2.1,-3.0) -- (\x-2.2,-3.7);
\draw (\x-2.1,-3.0) -- (\x-2.1,-3.7);
\draw (\x-2.1,-3.0) -- (\x-2.0,-3.7);

\draw (\x-2.0,-1.5) -- (\x-1.65,-3.0);
\draw[fill=red] (\x-1.65,-3.0) circle (0.06);
\draw (\x-1.65,-3.0) -- (\x-1.75,-3.7);
\draw (\x-1.65,-3.0) -- (\x-1.65,-3.7);
\draw (\x-1.65,-3.0) -- (\x-1.55,-3.7);

\def \xx {5.6}

\draw (\xx-2.7,-0.29) -- (\xx-3.30,-1.5);
\draw[fill=red] (\xx-3.30,-1.5) circle (0.06);
\draw (\xx-2.7,-0.29) -- (\xx-2.0,-1.5);
\draw[fill=red] (\xx-2.0,-1.5) circle (0.06);
\draw[fill=red] (\xx-2.7,-0.29) circle (0.06);

\draw (\xx-3.30,-1.5) -- (\xx-3.9,-3.0);
\draw[fill=red] (\xx-3.90,-3.0) circle (0.06);
\draw (\xx-3.90,-3.0) -- (\xx-4.0,-3.7);
\draw (\xx-3.90,-3.0) -- (\xx-3.9,-3.7);
\draw (\xx-3.90,-3.0) -- (\xx-3.8,-3.7);

\draw (\xx-3.30,-1.5) -- (\xx-3.45,-3.0);
\draw[fill=red] (\xx-3.45,-3.0) circle (0.06);
\draw (\xx-3.45,-3.0) -- (\xx-3.55,-3.7);
\draw (\xx-3.45,-3.0) -- (\xx-3.45,-3.7);
\draw (\xx-3.45,-3.0) -- (\xx-3.35,-3.7);

\draw (\xx-3.30,-1.5) -- (\xx-2.9,-3.0);
\draw[fill=red] (\xx-2.9,-3.0) circle (0.06);
\draw (\xx-2.9,-3.0) -- (\xx-3.0,-3.7);
\draw (\xx-2.9,-3.0) -- (\xx-2.9,-3.7);
\draw (\xx-2.9,-3.0) -- (\xx-2.8,-3.7);

\draw (\xx-2.0,-1.5) -- (\xx-2.6,-3.0);
\draw[fill=red] (\xx-2.60,-3.0) circle (0.06);
\draw (\xx-2.6,-3.0) -- (\xx-2.7,-3.7);
\draw (\xx-2.6,-3.0) -- (\xx-2.6,-3.7);
\draw (\xx-2.6,-3.0) -- (\xx-2.5,-3.7);

\draw (\xx-2.0,-1.5) -- (\xx-2.1,-3.0);
\draw[fill=red] (\xx-2.1,-3.0) circle (0.06);
\draw (\xx-2.1,-3.0) -- (\xx-2.2,-3.7);
\draw (\xx-2.1,-3.0) -- (\xx-2.1,-3.7);
\draw (\xx-2.1,-3.0) -- (\xx-2.0,-3.7);

\draw (\xx-2.0,-1.5) -- (\xx-1.65,-3.0);
\draw[fill=red] (\xx-1.65,-3.0) circle (0.06);
\draw (\xx-1.65,-3.0) -- (\xx-1.75,-3.7);
\draw (\xx-1.65,-3.0) -- (\xx-1.65,-3.7);
\draw (\xx-1.65,-3.0) -- (\xx-1.55,-3.7);

%
%

\end{tikzpicture}
\caption{$G_3$, split case.}\label{ramified}
\end{figure}
\vspace{-8pt}

The vertices in $G_p$ of level $k$ are meant to correspond to elliptic curves with conductor equal to $p^k$.  This is not a bijective correspondence however; the graph $G_p$ is merely a convenient tool to understand the network of $p$-isogenies between CM elliptic curves.            

Recall that if $G = (V,E)$ is a graph, and $v,v' \in V$ are vertices, then a {\it walk from $v$ to $v'$ of length $c$} is a sequence  
\[v = v_0, e_1, v_1, e_{\di}, \cdots , v_{c-1}, e_c, v_c = v',\] 
where each $e_i \in E$ is an edge connecting $v_{i-1}$ and $v_i$.      

\begin{definition}
A walk is called {\it non-backtracking} if $e_{i} \neq e_{i+1}$ for all $i$.
\end{definition}

\begin{theorem}
\label{thformula}
Let $a,b,c$ be non-negative integers.  Then $r_K(p^a, p^b,p^c)$ is the number of non-backtracking walks of length $c$ in $G_p$ starting at a {\it fixed} vertex of level $a$ and ending at a vertex of level $b$.    
\end{theorem}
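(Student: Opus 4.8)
The plan is to translate $r_K(p^a,p^b,p^c)$ into a walk count in the genuine isogeny graph of CM elliptic curves, and then to observe that this count is governed entirely by local data which the volcano $G_p$ reproduces faithfully. First I would set up the dictionary between cyclic subgroups and non-backtracking walks. A cyclic subgroup $C$ of order $p^c$ in an elliptic curve $E$ is the same datum as the flag $0 = C_0 \subsetneq C_1 \subsetneq \cdots \subsetneq C_c = C$ of its $p$-power subgroups, equivalently the chain of $p$-isogenies $E = E_0 \xrightarrow{\phi_1} E_1 \to \cdots \xrightarrow{\phi_c} E_c = E/C$ with $E_i = E/C_i$. Conversely, such a chain produces a subgroup $C_c$ of order $p^c$, and the key observation is that $C_c$ is \emph{cyclic} precisely when each order-$p^2$ subgroup $C_{i+1}/C_{i-1}\subset E_{i-1}$ is cyclic; using $\hat\phi_i\circ\phi_i=[p]$ one checks that cyclicity fails at step $i$ exactly when $\ker\phi_{i+1}=\ker\hat\phi_i$, i.e.\ when $\phi_{i+1}$ retraverses the edge used by $\phi_i$. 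Letting $\mathcal{G}_p$ be the graph whose vertices are the isomorphism classes of elliptic curves with CM by an order of $p$-power conductor in $K$ and whose edges are the $p$-isogenies up to isomorphism (one edge per order-$p$ subgroup, the two orientations identified by duality), this shows that cyclic subgroups of order $p^c$ of $E$ with quotient of conductor $p^b$ correspond bijectively to non-backtracking walks of length $c$ in $\mathcal{G}_p$ starting at $E$ and ending at a vertex of conductor $p^b$.

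Next I would record the local structure of $\mathcal{G}_p$ using classical CM theory (Deuring, Kohel; cf.\ \cite{kaniCM}, \cite{suther}): an elliptic curve of conductor $p^k$ has exactly $p+1$ cyclic subgroups of order $p$, and if $k\geq 1$ then one of them (ascending) has quotient of conductor $p^{k-1}$ and the other $p$ (descending) have quotient of conductor $p^{k+1}$, these $p$ descending isogenies landing on $p$ distinct curves because $h_{p^{k+1}}/h_{p^k}=p$; while if $k=0$ then $1+\chi_K(p)$ of them (horizontal) have quotient of conductor $1$ and the remaining $p-\chi_K(p)$ (descending) have quotient of conductor $p$. The isomorphism type of a neighbourhood of a vertex thus depends only on its level and on $\chi_K(p)$ — which already gives that $r_K(p^a,p^b,p^c)$ is independent of the chosen curve of conductor $p^a$ — and this is exactly the structure forced by the defining properties of $G_p$: $(p+1)$-regularity, a unique neighbour one level down at each level $\geq 1$ accounting for all non-rim edges, and a rim equal to the complete graph on $2+\chi_K(p)$ vertices.

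Finally I would count. A non-backtracking walk of length $c$ is determined by its sequence of level-changes $\delta_1,\dots,\delta_c\in\{-1,0,+1\}$ together with a choice among the admissible edges of the prescribed type at each step, and the number of admissible choices at a step depends only on the current level, on $\chi_K(p)$, and on the type of the preceding step: for instance, after descending to a level $\geq 1$ there are $p$ continuations, all descending; after ascending to a level $\geq 1$ there are $1$ ascending and $p-1$ descending continuations; after ascending to the rim there are $1+\chi_K(p)$ horizontal and $p-\chi_K(p)-1$ descending continuations; and there are $p+1$ choices for the first step. Hence the number of non-backtracking walks of length $c$ from a fixed level-$a$ vertex to the level-$b$ vertices satisfies one and the same transfer recursion in $\mathcal{G}_p$ and in $G_p$, so the two counts coincide, which is the assertion of the theorem. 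The step I expect to demand the most care is the behaviour at the rim: establishing the split/inert/ramified trichotomy for horizontal isogenies (where $\chi_K(p)$ and the $\Pic(\O_K)$-action enter), and verifying that the walk count is insensitive to the global differences between $\mathcal{G}_p$ and $G_p$ — most notably that traversing a long crater cycle of $\mathcal{G}_p$ offers exactly the same branching, type by type, as traversing the small complete graph $K_{2+\chi_K(p)}$ forming the rim of $G_p$.
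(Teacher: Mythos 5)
Your proposal is correct and follows essentially the same route as the paper: establish the one-step local structure of $p$-isogenies by conductor level (the paper does this via Kani's criterion that $C=E[I]$ for an ideal $I\subset\End(E)$ if and only if $\cond(E/C)\mid\cond(E)$), then identify cyclic $p^c$-subgroups with non-backtracking chains of $p$-isogenies using $\hat\phi\circ\phi=[p]$. The only difference is presentational: you are more explicit about the passage from the true isogeny graph $\mathcal{G}_p$ to the model volcano $G_p$ (e.g.\ that a long crater cycle and the small complete-graph rim yield the same branching counts), a point the paper handles only with the remark that the vertex--curve correspondence is not bijective.
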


\begin{proof}
First consider the case $c = 0$. Then $r_K(p^a, p^b, 1) = 0$ unless $a = b$, in which case it equals 1.  This agrees with the fact that there are no walks of length 0 from a fixed vertex of level $a$ to level $b$, unless $a = b$, in which case there is exactly one: the trivial walk.  

Next consider the case $c = 1$.  If $E$ is an elliptic curve of conductor $p^a$, then $E$ has $p+1$ subgroups $C$ of order $p$, and Theorem \ref{thformula} is claiming that the conductors $\cond(E/C)$ are exactly given by the levels of the $p+1$ neighbors of any vertex of level $a$ in $G_p$.  Specifically, if $a = 0$, the claim is that exactly $1 + \chi_K(p)$ of these quotients have conductor $1$, while the others have conductor $p$. And if $a > 0$, the claim is that exactly one of these quotients has conductor $p^{a-1}$, while the others have conductor $p^{a+1}$.  

We give a short proof of these two facts using Kani's result \cite[Thm.\ 20(b)]{kaniCM} that a finite subgroup $C \subset E$ is of the form $C = E[I]$ for some ideal $I \subset \End(E)$ if and only if the conductor of $E/C$ divides the conductor of $E$.  Thus, if $E$ has conductor $1$, then exactly $1 + \chi_K(p)$ subgroups of order $p$ give rise to quotients of conductor $1$, namely the subgroups of the form $E[\p]$ for an ideal $\p \subset \O_K$ of norm $p$.  For the other subgroups $C$, the ring $\End(E/C)$ certainly contains $\O_p = \Z + p\O_K$, and so this containment must be a strict equality: $\End(E/C) = \O_p$.  

If $E$ has conductor $p^a$ with $a > 0$, then the non-Dedekind ring $\End(E) \simeq \O_{p^a}$ has exactly one ideal of norm $p$: the ideal $I= p\O_{p^{a-1}}$.  If $E \simeq \C/L$, then $E/E[I] \simeq \C/\O_{p^{a-1}}L$, which has conductor $p^{a-1}$.  Thus, by Kani's result, $E$ has exactly one $p$-isogeny to a curve of conductor $p^{a-1}$, and no $p$-isogenies to curves of conductor $p^a$.  The remaining $p$ subgroups $C$ have the property that $\O_{p^{a+1}} = \Z + p\O_{p^a} \subset \End(E/C)$, but by the above, this inclusion must be an equality, i.e.\ $E/C$ has conductor $p^{a+1}$.  This proves the theorem in the case $c = 1$.            

The general case $c > 1$ follows immediately from the case $c = 1$, since every cyclic $p^c$-isogeny is a composition of $p$-isogenies.  Composing $p$-isogenies amounts to taking walks along the isogeny volcano $G_p$.  The condition that the walk is non-backtracking comes from the condition that the $p^c$-isogeny is cyclic.  Indeed, backtracking amounts to composing with the dual of the previous $p$-isogeny, which would make the the composition divisible by $p$ and hence not cyclic.      
\end{proof}


Theorem \ref{thformula} allows us to compute $r_K(p^a,p^b,p^c)$ explicitly:
\begin{corollary}\label{explicit}
Let $\chi_K(p)$ equal $-1$, $0$ , or $1$, depending on whether $p$ is inert, ramified, or split in $K$.  Then 
\[r_K(p^a,p^b,p^c) = 
\begin{cases}
 0 & \mbox{if } a < b  \mbox{ and } c < b-a \\
p^c & \mbox{if } 0 < a \leq b \mbox{ and } c = b-a\\
\left(p - \chi_K(p)\right)p^{c -1} & \mbox{if } 0 = a < b \mbox{ and } c = b\\
(p-1)p^{(b-a + c)/2 - 1} & \mbox{if } a \leq b \mbox{ and } b-a < c < b + a\\ 
(p - \chi_K(p) - 1)p^{b-1} & \mbox{if } a \leq b \mbox{ and }c = b + a\\
\left(1 + \chi_K(p)\right)\left(p - \chi_K(p)\right)p^{b-1} & \mbox{if } a \leq b \mbox{ and }c = b + a + 1\\
\left(\chi_K(p) + |\chi_K(p)|\right)(p-1)p^{b-1} & \mbox{if } a \leq b \mbox{ and }c > b + a + 1\\
r_K(p^b,p^b,p^{c-a+b}) & \mbox{if } a > b.\\
\end{cases}\]
Here, we use the convention that $p^s = 0$ if $s$ is not an integer.  Note that the formula in the last case reduces to one of the earlier cases.  
\end{corollary}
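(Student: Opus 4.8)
The plan is to deduce everything from Theorem \ref{thformula}: since $r_K(p^a,p^b,p^c)$ equals the number of non-backtracking walks of length $c$ in the volcano $G_p$ from a fixed vertex of level $a$ to a vertex of level $b$, it suffices to enumerate such walks, and the rigid shape of $G_p$ turns this into a finite bookkeeping problem. First I would record the local structure: a vertex of level $k\geq 1$ has one edge leading to level $k-1$ and $p$ edges leading to level $k+1$, while a rim vertex has $1+\chi_K(p)$ rim edges and $p-\chi_K(p)$ edges to level $1$; in particular $G_p$ really is $(p+1)$-regular with the three stated rim types.

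The key structural observation is that a non-backtracking walk cannot increase its level and then decrease it: after a step from level $k$ to level $k+1$, the unique edge at the new vertex leading back toward level $k$ is precisely the edge just traversed, so the next step must again increase the level, and inductively the level increases at every subsequent step. Hence every non-backtracking walk has the normal form \emph{``the level weakly decreases down to a minimum $m$, with a possible sojourn on the rim when $m=0$, after which the level strictly increases to the terminal level''}. The level-decreasing part is forced (each such step has a unique option, the edge toward the rim, which is never the forbidden one since one never arrives at such a vertex along that edge), so it is an initial stretch of the straight path $v_a,v_{a-1},\dots$ from the starting vertex to the rim; each level-increasing step after the first (``turning'') one has $p$ choices of child; and the remaining genuine choices occur at the turning step and at the rim steps, where the number of options is read directly off the local structure ($p-1$ child edges remain at a turn from a vertex of level $\geq 1$; $p-\chi_K(p)-1$ at a turn immediately off the rim with no preceding rim step; $p-\chi_K(p)$ child edges after any rim step; and $\chi_K(p)$ rim edges remain after an initial rim step).

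It then remains to sum over the possible shapes, parametrised by the minimum level $m=(a+b-c)/2\geq 1$ when the walk turns above the rim, or by $m=0$ together with $t=c-a-b\geq 0$ rim steps. For $a\leq b$ one finds: $c<b-a$ gives no walk; $c=b-a$ forces a pure descent, worth $p^c$ if $a\geq 1$ and $(p-\chi_K(p))p^{c-1}$ if $a=0$; $b-a<c<b+a$ gives a turn at $m\geq 1$, worth $(p-1)p^{(b-a+c)/2-1}$ (automatically $0$ when $(b-a+c)/2\notin\Z$, i.e.\ $c\not\equiv a+b\bmod 2$); and $c\geq b+a$ forces a rim sojourn of length $t=c-a-b$, worth $(p-\chi_K(p)-1)p^{b-1}$ when $t=0$ and $(1+\chi_K(p))\,\chi_K(p)^{t-1}(p-\chi_K(p))p^{b-1}$ when $t\geq 1$, the latter being independent of $t\geq 1$ and equal to $(\chi_K(p)+|\chi_K(p)|)(p-1)p^{b-1}$ once the split, inert, and ramified cases are checked separately (this is why the count stabilises for $c>b+a+1$, and why it vanishes unless $p$ splits). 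For $a>b$, the forced initial descent must be $v_a\to v_{a-1}\to\cdots\to v_b$, and any non-backtracking continuation of length $c-a+b$ that returns to level $b$ must begin with a level-decreasing or rim step (a level-increasing first step would trap it strictly below level $b$) and so automatically avoids the edge just used; the continuations are therefore in bijection with all non-backtracking walks of length $c-a+b$ from a level-$b$ vertex back to level $b$, yielding $r_K(p^b,p^b,p^{c-a+b})$.

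I expect the only real difficulty to be organisational: making the case $a=0$ (where there is no forced initial descent, so the first rim or child step is unconstrained) fit cleanly alongside $a\geq 1$, keeping track of which boundary value of $c$ lands in which case (in particular that $c=b$ with $a=0$ is governed by the third case and not the fifth), and verifying that the single closed form $(\chi_K(p)+|\chi_K(p)|)(p-1)p^{b-1}$ genuinely reproduces the split, inert, and ramified rim counts. Conceptually there is nothing beyond the ``descend, then ascend'' normal form for non-backtracking walks in $G_p$.
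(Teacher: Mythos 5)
Your proposal takes exactly the paper's route: invoke Theorem \ref{thformula} and exploit the normal form of a non-backtracking walk on $G_p$ (a forced march toward the rim to a minimum level, an optional one-directional sojourn along the rim, then a strictly outward descent), counting the choices at the turning step, the rim steps, and the $p$-fold branching thereafter — your write-up is in fact more detailed than the paper's own proof, and your reduction of the case $a>b$ to $r_K(p^b,p^b,p^{c-a+b})$ correctly fills in a point the paper leaves implicit. One small slip to fix: your rim-sojourn count $(1+\chi_K(p))\chi_K(p)^{t-1}(p-\chi_K(p))p^{b-1}$ is right for each $t\geq 1$, but it is \emph{not} independent of $t\geq1$ when $p$ is ramified (it equals $p^{b}$ for $t=1$, which is the corollary's value $(1+\chi_K(p))(p-\chi_K(p))p^{b-1}$ at $c=b+a+1$, and $0$ for $t\geq 2$), so the stabilized value $(\chi_K(p)+|\chi_K(p)|)(p-1)p^{b-1}$ applies only to $t\geq 2$, i.e.\ to $c>b+a+1$.
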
      
\begin{remark}
We will see in the proof that counting non-backtracking walks on $G_p$ is quite easy; it is only the formulas which are complicated.   
\end{remark}

\begin{proof}
The structure of $G_p$ makes it simple to compute the number of non-backtracking walks from level $a$ to level $b$ of length $c$.  Indeed, once a non-backtracking walk descends down the volcano, then it can never reascend.  So any non-backtracking walk starting at level $a$ consists of three stages.  It begins by ascending up the volcano to some level $k \leq a$, and there is a unique way to do this.  

If $k = 0$, then there is a middle stage where the walk may move horizontally along the rim.  In the split case, this may involve circling around the rim many times, but the walk is not allowed to change directions (by the non-backtracking assumption).  There are therefore two ways to perform this middle stage in the split case, no matter how many times the walk winds around the rim.  In the ramified case, the walk may traverse the unique edge on the rim once, but then it is forced to move down the volcano, again by the non-backtracking assumption.  In the inert case, this middle stage does not happen as there is a single vertex on the rim.  

The final stage of the walk is the descent down the volcano from level $k$ to level $b$.  There are many ways to descend to level $b$ but it is easy to count the number of possibilities.  Note that there are two reasons a non-backtracking walk would ascend up the volcano initially: either to get to level $b$ (if $a < b$ for example), or to stall, in the event that going down to level $b$ directly would result in a trip of length less than $c$.  
The corollary follows from consideration of the above analysis in each case, and writing the resulting formulas in terms of the character $\chi_K$.  
\end{proof}

As an example of how to compute these numbers ``by eye'', we now give the proof of Corollary \ref{gl2}.  This is the special case where $A = \C^2/\O_f^2$, and hence $\Aut(A) = \GL_2(\O_f)$. 

\begin{proof}[Proof of Corollary $\ref{gl2}$]
We need to show that if $g \mid \ell \mid f$, then 
\[r_K(f, g, f/\ell) =\begin{cases}
 1 & \mbox{if }g = \ell\\
  0 & \mbox{otherwise}.
  \end{cases}  \]
 By Lemma \ref{mult}, we may assume $f = p^a$, $g = p^b$, and $\ell = p^c$, with $a \geq c \geq b$.  Then if $b < c$, a vertex in level $a$ cannot walk to level $b$ in $a-c$ steps.  But if $b = c$, there is a unique (ascending) walk from a vertex of level $a$ to a vertex of level $b$ of length $a-b$, as claimed.     
\end{proof}

In the general case, the formula for $N(E,E')$ obtained by combining Theorem \ref{main} and Corollary \ref{explicit} does not seem to simplify much further.

\end{document}